\newtheorem{theorem}{Theorem}[section]
\newtheorem{corollary}[theorem]{Corollary}
\newtheorem{lemma}[theorem]{Lemma}
\newtheorem{proposition}[theorem]{Proposition}
\newtheorem{remark}[theorem]{Remark}
\newtheorem{example}[theorem]{Example}
\newtheorem{definition}[theorem]{Definition}
\newcommand{\C}{{\ensuremath{\mathbb{C}}}}
\newcommand{\Cm}{{\ensuremath{\C^{m\times n}}}}
\newcommand{\Cn}{{\ensuremath{\C^{n\times m}}}}
\newcommand{\Cnn}{{\ensuremath{\C^{n\times n}}}}
\newcommand{\Cmm}{{\ensuremath{\C^{m\times m}}}}
\newcommand{\Ra}{{\ensuremath{\cal R}}}
\newcommand{\Nu}{{\ensuremath{\cal N}}}
\newcommand{\ra}{{\ensuremath{\text{\rm rank}}}}
\newcommand{\ind}{{\ensuremath{\text{\rm Ind}}}}
\newcommand{\odagger}{\mathrel{\text{\textcircled{$\dagger$}}}}
\newcommand{\core}{\mathrel{\text{\textcircled{$\#$}}}}
\begin{document}

\title{Parametrizing $W$-weighted BT inverse to obtain the $W$-weighted $q$-BT inverse}

\author{D.E. Ferreyra\thanks{Universidad Nacional de R\'io Cuarto, CONICET, FCEFQyN, RN 36 KM 601, 5800 R\'io Cuarto, C\'ordoba, Argentina. E-mail: \texttt{deferreyra@exa.unrc.edu.ar}},\, 
N. Thome\thanks{Instituto Universitario de
Matem\'atica Multidisciplinar, Universitat Polit\`ecnica de
Val\`encia, 46022 Valencia, Spain. E-mail: \texttt{njthome@mat.upv.es}},\,
C. Torigino\thanks{Universidad Aut\'onoma de Entre R\'{\i}os, FCyT, 25 de Mayo 385, 3260 Concepci\'on del Uruguay, Entre R\'ios, Argentina. E-mail: \texttt{torigino.carlos@uader.edu.ar}}
}

\date{}
\maketitle

\begin{abstract}
The core-EP and BT inverses for rectangular matrices were studied recently in the literature. The main aim of this paper is to unify both concepts by means of a new kind of generalized inverse called $W$-weighted $q$-BT inverse. We analyze its existence and uniqueness by considering an adequate 
matrix system. Basic properties and
some interesting characterizations are proved for this new weighted generalized inverse. Also, we give a canonical form  of the $W$-weighted $q$-BT inverse by means of the weighted core-EP decomposition.
\end{abstract}

AMS Classification: 15A09, 15A24

\textrm{Keywords}: Weighted generalized inverses; $q$-BT inverse; $W$-weighted core-EP inverse; $W$-weighted Drazin inverse.

\section{Introduction and preliminaries}

We denote by $\mathbb{C}^{m\times n}$  the set of all $m\times n$ complex matrices. Let $A\in \mathbb{C}^{m\times n}$. The conjugate transpose, rank, null space and column space of $A$ are denoted by $A^*$, $\ra(A)$, $\Nu(A)$, and $\Ra(A)$, respectively. The index of $A\in \Cnn$, denoted by $\ind(A)$, is the smallest nonnegative integer $k$ such that $\ra(A^k) = \ra(A^{k+1})$. Moreover, $A^0=I_n$ will refer to the $n \times n$ identity matrix, and $0$ will denote the null matrix of appropriate size. The standard notations $P_S$ and $P_{S,T}$ stand for the orthogonal projector onto a subspace $S$ and a projector onto $S$ along $T$, respectively, when $\C^n$ is equal to the direct sum of subspaces $S$ and $T$.

The Drazin inverse of a matrix $A\in \Cnn$ is the unique matrix $X=A^d \in \Cnn$ that satisfies 
\[X A^{k+1} =A^k, \quad XAX=X, \quad AX=XA, ~\text{where} ~k=\ind(A).\]
When $\ind(A)=1$,  the Drazin inverse is called the group inverse of $A$ and is denoted by $A^\#$. 

The Moore-Penrose inverse of a matrix $A\in \Cm$ is the unique matrix $X=A^\dag \in \Cn$ that satisfies the Penrose equations
\[AXA=A, \quad XAX=X,
\quad (AX)^*=AX, \quad(XA)^*=XA.\]

We will denote by  $P_A$  the orthogonal projector $AA^{\dag}$ onto the subspace $\Ra(A)$.

In 2014, Manjunatha Prassad and Mohana  \cite{PrMo}  introduced the core-EP inverse of a matrix $A\in \Cnn$ of index $k$ as the unique matrix  $X=A^{\odagger}\in \Cnn$ that satisfies the conditions $XAX=X$ and $\Ra(X)=\Ra(X^*)=\Ra(A^k)$. That same year, Baksalary and Trenkler \cite{BaTr2}  defined the BT inverse of $A$ as the matrix $A^{\diamond}=(AP_A)^\dag$. When the matrix $A$ has index 1, both inverses are reduced to the well-known core inverse $A^{\core}=A^\#AA^\dag$ of $A$ \cite{BaTr}. 
 
In 1980, Cline and Greville \cite{ClGr} extended the Drazin inverse to rectangular matrices and it was called the $W$-weighted Drazin inverse.
Let $W\in \Cn$ be a fixed nonzero matrix. We recall that the $W$-weighted Drazin inverse of $A\in \Cm$, denoted by  $A^{d,W}$, is the unique matrix 
$X\in \Cm$ satisfying the three equations 
\[XWAWX=X, \quad AWX=XWA, \quad   XW(AW)^{k+1}=(AW)^k,\]
where $k=\max\{\ind(AW),\ind(WA)\}$. \\
If $k=1$, the $W$-weighted Drazin inverse of $A$ is called the $W$-weighted group inverse of $A$ and is denoted by $A^{\#,W}$. When $m=n$ and $W=I_n$, we recover the Drazin inverse, that is,  $A^{d,I_n}=A^d$. 

The $W$-weighted Drazin inverse satisfies the following  two dual  representations
\begin{equation}\label{properties w drazin}
A^{d,W}=A[(WA)^d]^2=[(AW)^d]^2A,\quad \text{whence} \quad A^{d,W}W=(AW)^d,  \quad WA^{d,W}=(WA)^d.
\end{equation}
Interesting representations and properties of the $W$-weighted Drazin
inverse were studied in \cite{StKaMa}.

Similarly, the core-EP inverse was extended to rectangular matrices in \cite{FeLeTh1}. It was named $W$-weighted core-EP inverse, and defined as $A^{\odagger,W}=(WAWP_{(AW)^k})^\dag$, which is the unique solution of 
\begin{equation}\label{w cep system}
WAWX=P_{(WA)^k}, \quad \Ra(X)\subseteq \Ra((AW)^k).
\end{equation}

For the particular  case $k=1$, the $W$-weighted core-EP inverse of $A$ is known as the $W$-weighted core inverse of $A$ and  denoted by $A^{\core,W}$. Clearly, when $m=n$ and $W=I_n$, we recover the core-EP inverse, that is,  $A^{\odagger,I_n}=A^{\odagger}$. 

The $W$-weighted core-EP inverse satisfies the following interesting properties \cite{FeLeTh1, GaChPa}
\begin{equation}\label{properties w core ep}
A^{\odagger,W}=A[(WA)^{\odagger}]^2, \quad A^{\odagger,W}WP_{(AW)^k}=(AW)^{\odagger},  \quad P_{(WA)^k}WA^{\odagger,W}=(WA)^{\odagger}.
\end{equation}

Recently, the $W$-weighted BT inverse of $A$ was defined in \cite{FeThTo} as the unique matrix $X=A^{\diamond, W} \in \Cm$ satisfying the following equations
\begin{equation}\label{def weighted bt}
XWAWX=X, \quad XWA=[W(AW)^2(AW)^\dag]^\dag WA, \quad AWX=AW[(WA)^2W(AW)^\dag]^\dag.
\end{equation}
It was also established that $A^{\diamond, W}=(WAWP_{AW})^\dag$.

Interesting results including different kinds of weighted generalized inverses can be found in \cite{Meng, Mo1, MoKo}.

In this paper we unify the definitions given in \eqref{w cep system} and \eqref{def weighted bt} given rise a new kind of generalized inverse called $W$-weighted $q$-BT inverse. We analyze its existence and uniqueness by considering an adequate matrix system. 

This paper is organized as follows. In Section 2, we present results of existence and uniqueness of the $W$-weighted $q$-BT inverse. More precisely, the existence will be characterized as the unique solution of three matrix equations. 
In Section 3, we obtain some characterizations of the $W$-weighted $q$-BT inverse. As an interesting consequence, we present new characterizations of the $W$-weighted core-EP 
and $W$-weighted BT inverses. In Section 4, we obtain a canonical form of the $W$-weighted $q$-BT inverse  by using a simultaneous decomposition of the matrices $A$ and $W$ called the weighted core-EP decomposition. Finally, some more properties of this new generalized inverse are investigated.

\section{Existence and Uniqueness}

In this section, we define and investigate the $W$-weighted $q$-BT  inverse for rectangular matrices $A\in \Cm$ by considering a non-null weight $W\in \Cn$. 

We start with a result of existence and uniqueness. Before that, we need the following auxiliary lemma.

\begin{lemma}\label{lemma 1} Let $A\in \Cm$ and $B\in \C^{n\times s}$. Then $P_B(AP_B)^\dag=(AP_B)^\dag$.
\end{lemma}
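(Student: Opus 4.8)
The plan is to argue entirely at the level of column spaces. The identity $P_B(AP_B)^\dag=(AP_B)^\dag$ should hold simply because the columns of $(AP_B)^\dag$ already lie in $\Ra(P_B)$, the subspace on which the orthogonal projector $P_B=BB^\dag$ acts as the identity. So the whole proof reduces to locating the range of $(AP_B)^\dag$ and observing that $P_B$ fixes it.

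First I would invoke the standard property of the Moore--Penrose inverse, $\Ra(M^\dag)=\Ra(M^*)$, valid for every matrix $M$. Applying it to $M=AP_B$ gives $\Ra\big((AP_B)^\dag\big)=\Ra\big((AP_B)^*\big)$. Since $P_B=BB^\dag$ is the orthogonal projector onto $\Ra(B)$, it is Hermitian, so $(AP_B)^*=P_B^*A^*=P_BA^*$. Consequently $\Ra\big((AP_B)^\dag\big)=\Ra(P_BA^*)\subseteq\Ra(P_B)$, which is the key inclusion.

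Next, because $P_B$ is idempotent with range $\Ra(P_B)$, it fixes every vector of $\Ra(P_B)$; equivalently, $P_BX=X$ for any matrix $X$ whose columns lie in $\Ra(P_B)$. Taking $X=(AP_B)^\dag$ and combining with the range inclusion from the previous step yields $P_B(AP_B)^\dag=(AP_B)^\dag$, as claimed.

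I expect no genuine obstacle here: the argument is a two-line range chase, and the only point requiring a word of justification is that $P_B$ acts as the identity on its own range, which is immediate from $P_B^2=P_B$ together with the characterization of $\Ra(P_B)$ as the fixed-point set of $P_B$. For readers who prefer a purely algebraic route, one can instead write $(AP_B)^\dag=(P_BA^*AP_B)^\dag P_BA^*$ and note that the Hermitian matrix $P_BA^*AP_B$ satisfies $P_B(P_BA^*AP_B)^\dag=(P_BA^*AP_B)^\dag$, but this merely repeats the same range observation in disguise, so I would present the range-chasing version as the main proof.
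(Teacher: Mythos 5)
Your proof is correct and is essentially the paper's argument: the paper likewise deduces $\Ra\big((AP_B)^\dag\big)\subseteq \Nu(I_n-P_B)$ from $(I_n-P_B)P_BA^*=0$, which is exactly your range chase via $\Ra\big((AP_B)^\dag\big)=\Ra\big((AP_B)^*\big)=\Ra(P_BA^*)\subseteq\Ra(P_B)$ together with the fact that $P_B$ acts as the identity on its own range. Your version merely spells out the steps the paper compresses into one line, so no changes are needed.
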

\begin{proof}
Since $(I_n-P_B)P_B A^*=0$ trivially holds, we have that $\Ra((AP_B)^\dag)\subseteq \Nu((I_n-P_B))$ is always valid, which in turn is equivalent to $P_B(AP_B)^\dag=(AP_B)^\dag$.
\end{proof}

\begin{theorem}\label{existence uniqueness}
Let $A\in \Cm$, $0\neq W\in \Cn$ and $q\in \mathbb{N} \cup \{0\}$. The system of equations
\begin{equation}\label{system wqbt}
XWAWX=X, \quad XWA=(WAWP_{(AW)^q})^\dag WA, \quad AWX=AW(WAWP_{(AW)^q})^\dag, 
\end{equation}  is  consistent and has a unique solution 
$X=(WAWP_{(AW)^q})^\dag$.
\end{theorem}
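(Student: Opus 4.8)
The plan is to write $M := WAWP_{(AW)^q}$ so that the candidate solution is $G := M^\dag$, and then to split the statement into consistency (that $X=G$ solves \eqref{system wqbt}) and uniqueness. I would first observe that the second and third equations of \eqref{system wqbt} are satisfied by $X=G$ for an essentially trivial reason: substituting $X=G$ into their left-hand sides gives $GWA$ and $AWG$, which are literally the right-hand sides $(WAWP_{(AW)^q})^\dag WA$ and $AW(WAWP_{(AW)^q})^\dag$. Thus the only equation carrying genuine content is the first one evaluated at $G$, namely the identity $GWAWG=G$, and I would concentrate the consistency argument there.

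To prove $GWAWG=G$, I would apply Lemma~\ref{lemma 1} with $WAW$ playing the role of the lemma's matrix and $(AW)^q$ playing the role of $B$ (the dimensions match, since $WAW\in\Cn$ and $P_{(AW)^q}\in\Cmm$). This yields the absorption property $P_{(AW)^q}G=G$. Consequently $MG=WAWP_{(AW)^q}G=WAWG$, so that $GWAWG=G(MG)=GMG$, and the Penrose relation $M^\dag MM^\dag=M^\dag$ reduces the right-hand side to $G$. This settles consistency.

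For uniqueness, I would suppose that $X$ is any solution of \eqref{system wqbt}. Starting from the first equation and substituting the second and third equations successively, I would compute
\[
X=XWAWX=(XWA)WX=(GWA)WX=GW(AWX)=GW(AWG)=GWAWG=G,
\]
the final equality being the identity established above. Hence $G$ is the unique solution. I expect the one delicate point to be the correct invocation of Lemma~\ref{lemma 1}, that is, recognizing $M$ as a product of the form ``matrix times $P_{(AW)^q}$'' so that the absorption $P_{(AW)^q}G=G$ becomes available; once that is in hand, both halves of the theorem collapse to the Penrose identity $GMG=G$ together with the two substitutions coming from the second and third equations.
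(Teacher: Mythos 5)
Your proposal is correct and takes essentially the same route as the paper: both arguments invoke Lemma~\ref{lemma 1} to absorb the projector (the paper inserts $P_{(AW)^q}$ via $WAW(WAWP_{(AW)^q})^\dag=WAWP_{(AW)^q}(WAWP_{(AW)^q})^\dag$, which is precisely your absorption $P_{(AW)^q}G=G$) and then conclude by the Penrose identity $M^\dag M M^\dag=M^\dag$. Your uniqueness chain $X=(XWA)WX=GW(AWX)=GWAWG=G$ is the paper's computation verbatim.
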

\begin{proof}
\emph{Existence.}
Let $X:=(WAWP_{(AW)^q})^\dag$. Clearly, $X$ satisfies the two last equations in \eqref{system wqbt}. Moreover, from Lemma \ref{lemma 1} we have 
\begin{eqnarray*}
XWAWX &=& (WAWP_{(AW)^q})^\dag WAW(WAWP_{(AW)^q})^\dag\\
&=& (WAWP_{(AW)^q})^\dag WAW P_{(AW)^q}(WAWP_{(AW)^q})^\dag \\
&=& (WAWP_{(AW)^q})^\dag \\
&=&  X.
\end{eqnarray*}
Thus, $X$ is a solution to \eqref{system wqbt}.\\
\emph{Uniqueness.}
Any arbitrary solution $X$ to the system \eqref{system wqbt} satisfies
\begin{eqnarray*}
X &=&(XWA)WX \\
&=& (WAWP_{(AW)^q})^\dag W(AWX) \\
&=& (WAWP_{(AW)^q})^\dag WAW(WAWP_{(AW)^q})^\dag\\
&=& (WAWP_{(AW)^q})^\dag WAW P_{(AW)^q}(WAWP_{(AW)^q})^\dag \\
&=& (WAWP_{(AW)^q})^\dag,
\end{eqnarray*}
which implies that the matrix $X=(WAWP_{(AW)^q})^\dag$ is the unique solution to 
\eqref{system wqbt}. 
\end{proof}

\begin{example} We consider the matrices 
\[
A=\left[\begin{array}{cccc}
1 &  1 & 0 & 0\\
0 &  0 & 1 & 0 \\
0 &  0 & 1 & 1\\
0 &  0 & 0 & 1 \\
0 &  0 & 0 & 0
\end{array}\right] \quad \text{ and }  \quad W=\left[\begin{array}{ccccc}
1 &  0 & 1 & 0 & 0 \\
0 &  0 & 1 & 1 & 0 \\
0 &  0 & 0 & 1 & 1 \\
0 &  0 & 0 & 0 & 1
\end{array}\right].
\]
It is easy to see that $k=\max\{{\rm Ind}(AW), {\rm Ind}(WA)\}=\max\{3,3\}=3$. \\ 
Let $X_0:=(WAWP_{(AW)^q})^\dag$, $X:=Q_{AW}X_0+(I_m-Q_{AW})W^*$ and we set $q=1$. By Lemma \ref{lemma 1}, it is clear that $X_0WAWX_0=X_0$. Then,
\begin{eqnarray*}
XWAWX &=& [Q_{AW}X_0+(I_m-Q_{AW})W^*]WAW[Q_{AW}X_0+(I_m-Q_{AW})W^*] \\
&=& [Q_{AW}X_0+(I_m-Q_{AW})W^*]WAWX_0\\
&=& Q_{AW}X_0WAWX_0+(I_m-Q_{AW})W^*WAWX_0 \\
&=& Q_{AW}X_0+(I_m-Q_{AW})W^*WAWX_0\\
&=& Q_{AW}X_0+(I_m-Q_{AW})W^*\\
&=& X
\end{eqnarray*}
and
\begin{eqnarray*}
AWX &=& AW[Q_{AW}X_0+(I_m-Q_{AW})W^*]\\
&=& AWQ_{AW}X_0\\
&=& AWX_0.
\end{eqnarray*} 
We obtain that $X:=Q_{AW}X_0+(I_m-Q_{AW})W^*$ satisfies the first and third equations in \eqref{system wqbt}. However, we can observe that the second equation does not hold. In fact, 
\[XWA=\left[\begin{array}{ccrr}
\frac{3}{5} &  \frac{3}{5} & -\frac{1}{5} & -1\\
0 &  0 & 0 & 0 \\
\frac{1}{5} &  \frac{1}{5} & -\frac{2}{5} & -1\\
0 &  0 & 1 & 2 \\
0 &  0 & 0 & 0
\end{array}\right] \neq \left[\begin{array}{ccrr}
\frac{1}{3} &  \frac{1}{3} & -\frac{2}{3} & -\frac{5}{3}\\
\frac{1}{3} &  \frac{1}{3} & -\frac{7}{6} & -\frac{8}{3} \\
\frac{1}{3} &  \frac{1}{3} & -\frac{1}{6} & -\frac{2}{3}\\
0 &  0 & 1 & 2 \\
0 &  0 & 0 & 0
\end{array}\right]=X_0WA.
\]

\end{example}

\begin{definition}\label{def wqbt} 
Let $A \in \Cm$, $0\neq W\in \Cn$, $k=\max\{\ind(AW), \ind(WA)\}$, and $q\in \mathbb{N}\cup \{0\}$. The unique matrix  $X \in\Cm$ that satisfies the system 
 (\ref{system wqbt})
is called the $W$-weighted $q$-BT inverse of $A$, and is denoted by $A^{\diamond_q,W}$. 
\end{definition}

\begin{remark} \label{rem q-BT} Note that when $m=n$ and $W=I_n$, the $W$-weighted $q$-BT inverse of $A$ gives rise a new generalized inverse for square matrices. For simplicity, it will be denoted as  $A^{\diamond_q}:=(AP_{A^q})^\dag$ and will be called the $q$-BT inverse of $A$. 
\end{remark}

The motivation for the study of this new kind of generalized inverse  is stated in the following result by showing that it extends certain inverses known in the literature.

\begin{corollary} \label{corolario 1}
Let $A \in \Cm$, $0\neq W\in \Cn$, $k=\max\{\ind(AW), \ind(WA)\}$, and $q\in \mathbb{N}\cup \{0\}$. Then 
\begin{enumerate}[(i)]
\item $A^{\diamond_q,W}=(WAW)^{\dag}$ if $q=0$; 
\item $A^{\diamond_q,W}=A^{\diamond,W}$ if $q=1$;
\item $A^{\diamond_q,W}=A^{\odagger,W}$ if either $q=\ind(AW)$ or $q\ge k$. 
\end{enumerate}
\end{corollary}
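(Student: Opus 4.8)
The plan is to read off everything from the closed form $A^{\diamond_q,W}=(WAWP_{(AW)^q})^\dag$ furnished by Theorem~\ref{existence uniqueness}, so that each item only requires identifying the orthogonal projector $P_{(AW)^q}$ for the relevant value of $q$.

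Items (i) and (ii) are immediate substitutions. For (i) I set $q=0$: since $AW\in\Cmm$ and the convention $A^0=I$ gives $(AW)^0=I_m$, the orthogonal projector onto $\Ra(I_m)=\C^m$ is $P_{I_m}=I_m$. Hence $WAWP_{(AW)^0}=WAW$ and $A^{\diamond_0,W}=(WAW)^\dag$. For (ii) I set $q=1$, so $P_{(AW)^1}=P_{AW}$ and $A^{\diamond_1,W}=(WAWP_{AW})^\dag$, which is exactly the formula $A^{\diamond,W}=(WAWP_{AW})^\dag$ recalled just after \eqref{def weighted bt}. Neither item requires any computation.

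The content lies in (iii), where I must show $P_{(AW)^q}=P_{(AW)^k}$ under either hypothesis. The key tool is the stabilization of ranges of powers: for a square matrix $M$ with $\ell=\ind(M)$ one has the chain $\Ra(M^\ell)=\Ra(M^{\ell+1})=\cdots$, so that $\Ra(M^j)=\Ra(M^\ell)$ for every $j\ge\ell$ (the containment $\Ra(M^{j+1})\subseteq\Ra(M^j)$ together with constancy of rank beyond the index). I apply this with $M=AW$ and $\ell=\ind(AW)$. First I note that both hypotheses force $q\ge\ind(AW)$: the case $q=\ind(AW)$ trivially, and the case $q\ge k$ because $k=\max\{\ind(AW),\ind(WA)\}\ge\ind(AW)$; for the same reason $k\ge\ind(AW)$. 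Therefore $\Ra((AW)^q)=\Ra((AW)^{\ind(AW)})=\Ra((AW)^k)$. Since an orthogonal projector is uniquely determined by its range, this triple equality yields $P_{(AW)^q}=P_{(AW)^k}$, and consequently $A^{\diamond_q,W}=(WAWP_{(AW)^q})^\dag=(WAWP_{(AW)^k})^\dag=A^{\odagger,W}$, the last equality being the definition of the $W$-weighted core-EP inverse.

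I do not anticipate a genuine obstacle; the only point deserving attention is the bookkeeping in (iii), namely recognizing that the two distinct-looking hypotheses both reduce to the single inequality $q\ge\ind(AW)$ and that $k$ itself satisfies $k\ge\ind(AW)$, which is precisely what lets me invoke range stabilization simultaneously on both sides. It is also worth stressing that only the range of each power enters through $P_{(AW)^q}$, so there is no need to compare the exact magnitudes of $q$ and $k$.
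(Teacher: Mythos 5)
Your proposal is correct and follows essentially the same route as the paper: all three items are read off the closed form $A^{\diamond_q,W}=(WAWP_{(AW)^q})^\dag$ from Theorem~\ref{existence uniqueness}, with (iii) reduced to the identity $P_{(AW)^q}=P_{(AW)^k}$. The only difference is that the paper merely cites this identity as a fact, whereas you actually justify it via stabilization of the ranges $\Ra((AW)^j)$ for $j\ge\ind(AW)$ together with $k\ge\ind(AW)$ --- a welcome filling-in of detail, not a different approach.
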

\begin{proof} (i) Follows from Theorem \ref{existence uniqueness} with $q=0$. \\
(ii) It is a consequence from Theorem \ref{existence uniqueness} and the expression of $A^{\diamond,W}$ recalled below \eqref{def weighted bt}.\\
(iii) It follows from Theorem \ref{existence uniqueness}, the definition of $A^{\odagger,W}$ and the fact that $P_{(AW)^q}=P_{(AW)^k}$ when either $q=\ind(AW)$ or $q\ge k$.
\end{proof}

\begin{remark} When $WAW=A$, from the above corollary it follows that the $W$-weighted $q$-BT inverse of $A$ reduces to the Moore-Penrose inverse of $A$. Note that the condition $WAW=A$ is a Stein equation (in $A$). We recall that this equation has important applications in system theory, among them, the stability analysis of discrete-time systems \cite{Fu}.
\end{remark}
\begin{remark}  If $m=n$ and $W=I_n$, from Corollary \ref{corolario 1} we deduce that the $W$-weighted $q$-BT inverse  concides with the BT inverse and core-EP inverse,  when  $q=1$ and $q\ge k=\ind(A)$, respectively. 
\end{remark}

 An interesting relationship between the products $AW$ and  $WA$ is
\begin{equation}\label{w-product}
(AW)^{\ell-1} A= A(WA)^{\ell-1}, \quad \ell \in \mathbb{N}.
\end{equation}

\begin{corollary}\label{representacion q-BT} Let $A\in \Cm$, $0\neq W\in \Cn$ and $q\in \mathbb{N} \cup \{0\}$. Then 
\[A^{\diamond_q,W}=[W(AW)^{(q+1)} [(AW)^q]^\dag]^\dag=[(WA)^{q+1} W[(AW)^q]^\dag]^\dag.\]
\end{corollary}
\begin{proof}
Follows from Theorem \ref{existence uniqueness} and \eqref{w-product}.
\end{proof}

In the following example we show that when $1<q<k$ (eventually with $q \neq \ind(AW)$), this new inverse is different from other known ones. 

\begin{example} 
Consider the matrices 
\[
A=\left[\begin{array}{ccc}
1 &  1 & 0 \\
0 &  1 & 0 \\
0 &  0 & 1 \\
0 &  0 & 0
\end{array}\right] \quad \text{and}  \quad W=\left[\begin{array}{cccc}
1 &  1 & 0 & 0 \\
0 &  0 & 1 & 0 \\
0 &  0 & 0 & 1
\end{array}\right].\]
Since $\ind(AW)=3$ and $\ind(WA)=2$, we have $k=\max\{\ind(AW), \ind(WA)\}=3$. Therefore, we must consider $q=2$. Thus, the $W$-weighted core-EP inverse, the $W$-weighted BT inverse, and the $W$-weighted $2$-BT inverse are given by 
\[ 
A^{\odagger,W} =
\left[\begin{array}{ccc}
1 &  0 & 0 \\
0 &  0 & 0 \\
0 &  0 & 0 \\
0 &  0 & 0
\end{array}\right], \quad 
A^{\diamond,W}=
\left[\begin{array}{ccc}
\frac{1}{6} &  0 & 0 \\
\frac{1}{6} &  0 & 0 \\
\frac{1}{3} &  0 & 0 \\
0 &  0 & 0
\end{array}\right], \quad A^{\diamond_2,W}=
\left[\begin{array}{ccc}
\frac{1}{2} &  0 & 0 \\
\frac{1}{2} &  0 & 0 \\
0 &  0 & 0 \\
0 &  0 & 0
\end{array}\right]. \] 
\end{example} 

Some properties of the $W$-weighted $q$-BT inverse are established below. For example,  the $W$-weighted $q$-BT inverse can be expressed in terms of the $q$-BT inverse. In particular, the $q$-BT inverse provides the range and null space of the $W$-weighted BT inverse.

\begin{theorem}\label{properties BT}
Let $A\in \Cm$, $0\neq W\in \Cn$ and $q\in \mathbb{N} \cup \{0\}$. Then the following statements hold:
\begin{enumerate}[{\rm (i)}]
\item $A^{\diamond_q, W}=(W[(AW)^{\diamond_q}]^\dag)^\dag$.
\item $\Ra(A^{\diamond_q,W})=\Ra(P_{(AW)^q}(WAW)^*)$ and $\Nu(A^{\diamond_q,W})=\Nu(P_{(AW)^q}(WAW)^*)$.
\item $\Ra(A^{\diamond, W})= \Ra([(AW)^{\diamond_q}]^\dag)^* W^*)$ and $\Nu(A^{\diamond,W})=\Nu([(AW)^{\diamond_q}]^\dag)^* W^*)$. 
\item $\Ra(A^{\diamond_q,W})=\Ra([((AW)^q)^\dag]^*[(AW)^{q+1}]^* W^*)$  and $\Nu(A^{\diamond_q,W})=\Nu([(AW)^{q+1}]^* W^*)$.
\item $\Ra(A^{\diamond_q,W})\subseteq \Ra((AW)^q)$.
\item $P_{(AW)^q} A^{\diamond_q,W}=A^{\diamond_q,W}$.
\end{enumerate}
\end{theorem}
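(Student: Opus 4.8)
The plan is to reduce every assertion to the two explicit formulas already available for $A^{\diamond_q,W}$, namely $A^{\diamond_q,W}=(WAWP_{(AW)^q})^\dag$ from Theorem \ref{existence uniqueness} and $A^{\diamond_q,W}=[W(AW)^{q+1}[(AW)^q]^\dag]^\dag$ from Corollary \ref{representacion q-BT}, combined with the standard Moore-Penrose identities $(B^\dag)^\dag=B$, $(B^\dag)^*=(B^*)^\dag$, $\Ra(B^\dag)=\Ra(B^*)$, $\Nu(B^\dag)=\Nu(B^*)$, and the properties $P_S^*=P_S$, $\Nu(P_S)=S^\perp$ of orthogonal projectors.

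For (i), I would use Remark \ref{rem q-BT} to write $(AW)^{\diamond_q}=(AWP_{(AW)^q})^\dag$, apply the involution $(B^\dag)^\dag=B$ to obtain $[(AW)^{\diamond_q}]^\dag=AWP_{(AW)^q}$, left-multiply by $W$, and take the Moore-Penrose inverse; by Theorem \ref{existence uniqueness} this is exactly $A^{\diamond_q,W}$. For (ii), writing $A^{\diamond_q,W}=M^\dag$ with $M=WAWP_{(AW)^q}$ and invoking $\Ra(M^\dag)=\Ra(M^*)$ and $\Nu(M^\dag)=\Nu(M^*)$, the only computation needed is $M^*=P_{(AW)^q}(WAW)^*$, which uses $P_{(AW)^q}^*=P_{(AW)^q}$. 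Statement (iii) is then (ii) re-encoded via (i): substituting $[(AW)^{\diamond_q}]^\dag=AWP_{(AW)^q}$ gives $([(AW)^{\diamond_q}]^\dag)^*W^*=P_{(AW)^q}(AW)^*W^*=P_{(AW)^q}(WAW)^*$, so both its range and its null space coincide with those in (ii). For (v), part (ii) yields $\Ra(A^{\diamond_q,W})=\Ra(P_{(AW)^q}(WAW)^*)\subseteq\Ra(P_{(AW)^q})=\Ra((AW)^q)$, and (vi) is the direct instance of Lemma \ref{lemma 1} obtained by taking the matrix there to be $WAW$ and the projector to be $P_{(AW)^q}$ (say $B=(AW)^q$), which gives $P_{(AW)^q}(WAWP_{(AW)^q})^\dag=(WAWP_{(AW)^q})^\dag$.

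The substantive part is (iv). Starting from $A^{\diamond_q,W}=N^\dag$ with $N=W(AW)^{q+1}[(AW)^q]^\dag$ from Corollary \ref{representacion q-BT}, the range identity follows at once from $\Ra(N^\dag)=\Ra(N^*)$ once $N^*$ is written as $([(AW)^q]^\dag)^*[(AW)^{q+1}]^*W^*$. The null space identity $\Nu(A^{\diamond_q,W})=\Nu([(AW)^{q+1}]^*W^*)$ is more delicate, since a priori this factorization of $N^*$ gives only the inclusion $\Nu([(AW)^{q+1}]^*W^*)\subseteq\Nu(N^*)$. To obtain equality I would set $C=(AW)^q$, rewrite $[(AW)^{q+1}]^*=C^*(AW)^*$, and hence $N^*=(C^\dag)^*C^*(AW)^*W^*$. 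The key observation is that $(C^\dag)^*C^*=(CC^\dag)^*=CC^\dag=P_{(AW)^q}$, so $N^*=P_{(AW)^q}(AW)^*W^*$; since $\Nu(P_{(AW)^q})=\Ra((AW)^q)^\perp=\Nu(C^*)$, a vector $x$ satisfies $N^*x=0$ iff $(AW)^*W^*x\in\Nu(C^*)$, i.e. iff $C^*(AW)^*W^*x=[(AW)^{q+1}]^*W^*x=0$. This stripping of the factor $(C^\dag)^*$ through the identity $\Nu(P_{(AW)^q})=\Nu(C^*)$ is the step I expect to be the main obstacle; everything else is bookkeeping with the Moore-Penrose laws.
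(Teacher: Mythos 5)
Your proposal is correct and takes essentially the same route as the paper: every item is reduced to the representations $A^{\diamond_q,W}=(WAWP_{(AW)^q})^\dag$ and $A^{\diamond_q,W}=[W(AW)^{q+1}((AW)^q)^\dag]^\dag$ plus the standard Moore--Penrose identities $\Ra(B^\dag)=\Ra(B^*)$, $\Nu(B^\dag)=\Nu(B^*)$, $(B^\dag)^\dag=B$. The only local variations are harmless: in the null-space half of (iv) the paper closes the equality with a two-sided inclusion chain (left-multiplying by $[(AW)^q]^*$ and using $CC^\dag C=C$ for $C=(AW)^q$), whereas you collapse $(C^\dag)^*C^*$ to the orthogonal projector $P_{(AW)^q}$ and use $\Nu(P_{(AW)^q})=\Nu(C^*)$ --- the same identity in different packaging --- and in (vi) you apply Lemma \ref{lemma 1} directly while the paper deduces it from (v); both arguments are valid.
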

\begin{proof}
(i) By Theorem \ref{existence uniqueness} we have $A^{\diamond_q,W}=(W[AWP_{(AW)^q}])^\dag$. Now, by Remark \ref{rem q-BT}  we deduce  $AWP_{(AW)^q}=((AW)^{\diamond_q})^\dag$, whence the  statement  is clear.\\
(ii) By Theorem \ref{existence uniqueness} we have $A^{\diamond_q,W}=(WAWP_{(AW)^q})^\dag$. Now, the statement follows of the properties $\Ra(B^\dag)=\Ra(B^*)$ and $\Nu(B^\dag)=\Nu(B^*)$. \\
(iii) It follows immediately  from  part  (i). \\
(iv) By Corollary \ref{representacion q-BT} and the property $\Ra(B^\dag)=\Ra(B^*)$ we get 
\[\Ra(A^{\diamond_q,W})=\Ra([W(AW)^{(q+1)} ((AW)^q)^\dag]^*)=\Ra([((AW)^q)^\dag]^*[(AW)^{q+1}]^* W^*).\]  
Similarly, Corollary \ref{representacion q-BT} and the property $\Nu(B^\dag)=\Nu(B^*)$ imply
\begin{eqnarray*}
\Nu(A^{\diamond_q,W}) &=& \Nu([((AW)^q)^\dag]^*[(AW)^{q+1}]^* W^*)\\
&=& \Nu([((AW)^q)^\dag]^*[(AW)^q]^* (AW)^* W^*) \\
&\subseteq & \Nu([(AW)^q]^*[((AW)^q)^\dag]^*[(AW)^q]^* (AW)^* W^*)\\ 
&=& \Nu([(AW)^{q+1}]^* W^*)\\
&\subseteq & \Nu([((AW)^q)^\dag]^*[(AW)^{q+1}]^* W^*)\\
&=& \Nu(A^{\diamond_q,W}).
\end{eqnarray*}
Thus, $\Nu(A^{\diamond,W})=\Nu([(AW)^{q+1}]^* W^*)$. \\
(v) It directly follows from (ii) and the fact that $\Ra(P_{(AW)^k})=\Ra((AW)^k)$. \\
(vi) It is sufficient to note that $P_{(AW)^q} A^{\diamond_q,W}=A^{\diamond_q,W}$ holds if and only if $\Ra(A^{\diamond_q,W})\subseteq \Nu(I_m-P_{(AW)^q})=\Ra(P_{(AW)^q})=\Ra((AW)^q)$, which is true due to part (v).\\
\end{proof}
We finish this section by  showing that the $W$-weighted $q$-BT inverse can be written as  a generalized inverse with prescribed range and null space. Moreover,  some idempotent matrices related to the $W$-weighted $q$-BT inverse are found.

\begin{proposition}\label{prop 1} Let $A\in \Cm$, $0\neq W\in \Cn$ and $q\in \mathbb{N} \cup \{0\}$. Then the following representations are valid:
\begin{enumerate}[(i)]
\item $A^{\diamond_q,W}=(WAW)^{(2)}_{\Ra(P_{(AW)^q}(WAW)^*),\,\Nu([(AW)^{q+1}]^* W^*)}$;
\item $WAWA^{\diamond_q,W}=P_{\Ra(W [(AW)^{\diamond_q}]^\dag (WAW)^*),\,\Nu([(AW)^{q+1}]^* W^*)}$;
\item $A^{\diamond_q,W}WAW=P_{\Ra(P_{(AW)^q}(WAW)^*),\,\Nu([(AW)^{q+1}]^* W^* WAW)}$.
\end{enumerate}
\end{proposition}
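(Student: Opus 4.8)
The plan is to treat all three parts as immediate consequences of a single structural fact: by the first equation of the system \eqref{system wqbt}, $A^{\diamond_q,W}$ is an outer (i.e. $\{2\}$-)inverse of the matrix $M:=WAW$, since $A^{\diamond_q,W}(WAW)A^{\diamond_q,W}=A^{\diamond_q,W}$. I would first recall the standard structure theory of outer inverses: if $X$ satisfies $XMX=X$ with $\Ra(X)=T$ and $\Nu(X)=S$, then $X$ is the \emph{unique} such matrix, denoted $M^{(2)}_{T,S}$, and both $MX$ and $XM$ are idempotent. The two remaining identities and null-space descriptions needed are already available: Theorem \ref{properties BT}(ii) gives $\Ra(A^{\diamond_q,W})=\Ra(P_{(AW)^q}(WAW)^*)$, and Theorem \ref{properties BT}(iv) gives $\Nu(A^{\diamond_q,W})=\Nu([(AW)^{q+1}]^*W^*)$.

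For part (i), I would simply assemble these ingredients. The first equation of \eqref{system wqbt} makes $A^{\diamond_q,W}$ an outer inverse of $WAW$; its range is $\Ra(P_{(AW)^q}(WAW)^*)$ by Theorem \ref{properties BT}(ii) and its null space is $\Nu([(AW)^{q+1}]^*W^*)$ by Theorem \ref{properties BT}(iv). By uniqueness of the outer inverse with prescribed range and null space, $A^{\diamond_q,W}$ must coincide with $(WAW)^{(2)}_{T,S}$ for exactly these two subspaces $T,S$, which is the assertion.

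For parts (ii) and (iii), I would use that an outer inverse $X=M^{(2)}_{T,S}$ induces two oblique projectors. First, $MX=P_{MT,\,S}$: it is idempotent since $(MX)^2=M(XMX)=MX$, its range is $M\Ra(X)=MT$, and its null space is $\Nu(X)=S$ because $MXv=0$ forces $Xv=XMXv=0$. Second, $XM=P_{T,\,U}$: it is idempotent, $\Ra(XM)=\Ra(X)=T$ (using $XMX=X$), and $U=\Nu(XM)=\{v:Mv\in\Nu(X)\}$, the preimage of $\Nu(X)$ under $M$. Specializing to $M=WAW$: for (ii) the range is $MT=\Ra\big(WAWP_{(AW)^q}(WAW)^*\big)$, and I would invoke the identity $AWP_{(AW)^q}=[(AW)^{\diamond_q}]^\dag$ recorded inside the proof of Theorem \ref{properties BT}(i) to rewrite $WAWP_{(AW)^q}=W[(AW)^{\diamond_q}]^\dag$, so that $MT=\Ra\big(W[(AW)^{\diamond_q}]^\dag(WAW)^*\big)$, while the along-space is $S=\Nu([(AW)^{q+1}]^*W^*)$; for (iii) the range is $T=\Ra(P_{(AW)^q}(WAW)^*)$ and substituting $\Nu(X)=\Nu([(AW)^{q+1}]^*W^*)$ into $U=\{v:Mv\in\Nu(X)\}$ yields at once $U=\Nu([(AW)^{q+1}]^*W^*WAW)$, as claimed.

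The computations of the ranges and null spaces of $MX$ and $XM$ are routine once the generic facts $\Nu(MX)=\Nu(X)$, $\Ra(XM)=\Ra(X)$ and $\Nu(XM)=\{v:Mv\in\Nu(X)\}$ are in hand; the only point demanding care is the passage $MT=\Ra(WAWP_{(AW)^q}(WAW)^*)=\Ra(W[(AW)^{\diamond_q}]^\dag(WAW)^*)$ in (ii), which hinges on correctly importing $AWP_{(AW)^q}=[(AW)^{\diamond_q}]^\dag$ from Theorem \ref{properties BT}(i) and keeping track of the left factor $W$. Everything else reduces to the idempotency identities above and the range/null-space data already established in Theorem \ref{properties BT}.
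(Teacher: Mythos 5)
Your proposal is correct and takes essentially the same route as the paper: both proofs treat $A^{\diamond_q,W}$ as an outer inverse of $WAW$, import $\Ra(A^{\diamond_q,W})=\Ra(P_{(AW)^q}(WAW)^*)$ and $\Nu(A^{\diamond_q,W})=\Nu([(AW)^{q+1}]^*W^*)$ from Theorem \ref{properties BT}(ii) and (iv), and use the identity $AWP_{(AW)^q}=[(AW)^{\diamond_q}]^\dag$ to rewrite the range in part (ii). The only cosmetic difference is in part (iii), where you compute $\Nu(XM)$ directly as the preimage $\{v: WAWv\in\Nu(X)\}$, whereas the paper invokes the equivalent fact that $\Nu(B)=\Nu(C)$ implies $\Nu(BD)=\Nu(CD)$.
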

\begin{proof}
(i) By definition of the $W$-weighted $q$-BT inverse we know that $A^{\diamond, W}WAWA^{\diamond, W}=A^{\diamond, W}$. Now, parts (ii) and (iv) of Theorem \ref{properties BT} imply  $\Ra(A^{\diamond_q,W})=\Ra(P_{(AW)^q}(WAW)^*)$ and  $\Nu(A^{\diamond, W})=\Nu([(AW)^{q+1}]^* W^*)$, respectively. Thus, the statement follows by definition of an outer inverse with  prescribed range and null space.
\\
(ii) Since $A^{\diamond, W}WAWA^{\diamond, W}=A^{\diamond, W}$ by definition, we have that  $WAWA^{\diamond_q, W}$ is idempotent.  Also, from Theorem \ref{properties BT} (ii) we obtain 
\begin{eqnarray*} 
\Ra(WAWA^{\diamond_q, W})&=& WAW\Ra(A^{\diamond_q, W})\\
&=& WAW\Ra(P_{(AW)^q}(WAW)^*) \\
&=& W\Ra(AWP_{(AW)^q}(WAW)^*)\\
&=& W\Ra([(AW)^{\diamond_q}]^\dag (WAW)^*)\\
&=& \Ra(W[(AW)^{\diamond_q}]^\dag (WAW)^*).
\end{eqnarray*}
On the other hand, note that $\Nu(WAWA^{\diamond, W})=\Nu(A^{\diamond, W})$ because $A^{\diamond, W}$ is an outer inverse of $WAW$. Thus, from Theorem \ref{properties BT} (iv) we have $\Nu(WAWA^{\diamond, W})=\Nu([(AW)^{q+1}]^* W^*)$. \\
(iii) By  Theorem \ref{properties BT} (ii) we know that $\Ra(A^{\diamond_q,W})=\Ra(P_{(AW)^q}(WAW)^*)$. Thus, 
as $A^{\diamond, W}WAWA^{\diamond, W}=A^{\diamond, W}$, clearly $\Ra(A^{\diamond_q,W}WAW)=\Ra(A^{\diamond_q,W})=\Ra(P_{(AW)^q}(WAW)^*)$. \\
Similarly, from Theorem \ref{properties BT} (iv) we know that $\Nu(A^{\diamond_q,W})=\Nu([(AW)^{q+1}]^* W^*)$. On the other hand, it is easy to see that $\Nu(B)=\Nu(C)$ implies $\Nu(BD)=\Nu(CD)$, where $B$, $C$, and $D$ are complex rectangular matrices of adequate sizes.
Therefore, $\Nu(A^{\diamond,W}WAW)=\Nu([(AW)^{q+1}]^* W^* WAW)$.
\end{proof}

Recall that the Moore-Penroe inverse \cite{BeGr}, the core-EP inverse \cite[Theorem 3.2]{FeLeTh3} and the BT inverse \cite[Theorem 4.7]{FeMa4} of a matrix  $A\in \Cnn$ of index $k$, are outer inverses that can be represented as outer inverse with prescribed range and null spaces as:
\begin{equation}\label{2 inversas}
A^\dag=A^{(2)}_{\Ra(A^*),\,\Nu(A^*)},\quad A^{\odagger}=A^{(2)}_{\Ra(A^k),\,\Nu((A^k)^*)} \quad \text{and} \quad A^\diamond=A^{(2)}_{\Ra(P_A A^*),\,\Nu((A^2)^*)}.
\end{equation}

Our next theorem shows that the representations given in \eqref{2 inversas} are particular cases of the following expression for the $q$-BT inverse. 

\begin{corollary}\label{corolario outer}Let $A\in \Cnn$ and $q\in \mathbb{N} \cup \{0\}$. Then the following statements hold:
\begin{enumerate}[\rm (i)]
\item $A^{\diamond_q}=A^{(2)}_{\Ra(P_{A^q}A^*),\, \Nu([A^{q+1}]^*)}$
\item $AA^{\diamond_q}=P_{\Ra([A^{\diamond_q}]^\dag A^*),\, \Nu([A^{q+1}]^*)}$.
\item $A^{\diamond_q}A=P_{\Ra(P_{A^q}A^*),\, \Nu([A^{q+1}]^*A)}$.
\end{enumerate} 
\end{corollary}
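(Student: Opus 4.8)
The plan is to obtain Corollary \ref{corolario outer} as the special case $m=n$ and $W=I_n$ of Proposition \ref{prop 1}, since that proposition already establishes the corresponding three representations for the $W$-weighted $q$-BT inverse in full generality. The main observation is that when $W=I_n$ the matrix $WAW$ collapses to $A$, the orthogonal projector $P_{(AW)^q}$ becomes $P_{A^q}$, and by Remark \ref{rem q-BT} the $W$-weighted $q$-BT inverse $A^{\diamond_q,W}$ coincides with the $q$-BT inverse $A^{\diamond_q}=(AP_{A^q})^\dag$. Thus each of the three formulae in Proposition \ref{prop 1} should specialize, after these substitutions, directly to the claimed expressions in parts (i)--(iii).

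The key steps, in order, are as follows. First I would substitute $W=I_n$ into Proposition \ref{prop 1}(i): the range becomes $\Ra(P_{A^q}A^*)$ and the null space becomes $\Nu([A^{q+1}]^*)$, yielding $A^{\diamond_q}=A^{(2)}_{\Ra(P_{A^q}A^*),\,\Nu([A^{q+1}]^*)}$, which is part (i). Second, for part (ii), I would specialize Proposition \ref{prop 1}(ii); the only point needing care is the range descriptor $\Ra(W[(AW)^{\diamond_q}]^\dag(WAW)^*)$, which under $W=I_n$ becomes $\Ra([A^{\diamond_q}]^\dag A^*)$, while the null space again reduces to $\Nu([A^{q+1}]^*)$, giving part (ii). Third, for part (iii), I would specialize Proposition \ref{prop 1}(iii), whose null space $\Nu([(AW)^{q+1}]^*W^*WAW)$ reduces to $\Nu([A^{q+1}]^*A)$ and whose range is $\Ra(P_{A^q}A^*)$, producing part (iii).

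Strictly speaking there is a subtlety to flag: Proposition \ref{prop 1} is stated for $A\in\Cm$ with a genuine weight $W\in\Cn$, so to invoke it at $W=I_n$ one must take $m=n$, which is exactly the hypothesis of Corollary \ref{corolario outer}; thus the specialization is legitimate. The main (and only real) obstacle is bookkeeping rather than mathematics: I must verify that each projector, conjugate-transpose, and Moore--Penrose factor in the three general descriptors collapses correctly under $WAW\mapsto A$ and $P_{(AW)^q}\mapsto P_{A^q}$, and in particular that the identity $AWP_{(AW)^q}=((AW)^{\diamond_q})^\dag$ from Theorem \ref{properties BT}(i) specializes to $AP_{A^q}=(A^{\diamond_q})^\dag$, so that the range in part (ii) is stated in the intended form. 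Once these substitutions are checked, the three statements follow immediately, and I would close by remarking that recovering the formulae in \eqref{2 inversas} for $A^\dag$, $A^{\odagger}$, and $A^\diamond$ is then a further specialization of part (i) to the values $q=0$, $q\ge k$, and $q=1$, respectively.
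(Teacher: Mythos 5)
Your proposal is correct and coincides exactly with the paper's own (one-line) proof, which likewise obtains all three parts by specializing Proposition \ref{prop 1} to $m=n$ and $W=I_n$; your substitution checks, including the collapse of $\Ra(W[(AW)^{\diamond_q}]^\dag(WAW)^*)$ to $\Ra([A^{\diamond_q}]^\dag A^*)$, are all accurate. The closing observation about recovering \eqref{2 inversas} for $q=0$, $q=1$, and $q\ge k$ also matches the remark the paper makes immediately after the corollary.
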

\begin{proof}
Items (i)-(iii) immediately follow from Proposition \ref{prop 1} by taking $m=n$ and $W=I_n$. 
\end{proof}

\begin{remark} From Corollary \ref{corolario outer} (i), it is clear that when $q=0$ and $q=1$, we recover the expressions given in \eqref{2 inversas} for the Moore-Penrose inverse and the BT inverse, respectively. \\ On the other hand, if $q\ge k=\ind(A)$ we have that $\Ra(P_{A^q}A^*)=\Ra((AP_{A^q})^*)=\Ra((AP_{A^k})^\dag)=\Ra(A^{\odagger})=\Ra(A^k)$. Also, by definition of index, we obtain $\Nu((A^{q+1})^*)=\Nu((A^{k+1})^*)=\Nu((A^k)^*)$. In consequence, 
 $A^{\diamond_q}=A^{(2)}_{\Ra(A^k),\, \Nu((A^k)^*)}=A^{\odagger}$.
\end{remark}

\section{Algebraic characterizations}

In this section we give some algebraic characterizations of the $W$-weighted $q$-BT inverse.

\begin{theorem} Let $A \in \Cm$, $0\neq W\in \Cn$, $k=\max\{\ind(AW), \ind(WA)\}$, and $q\in \mathbb{N}\cup \{0\}$. 
There exists a unique matrix $X$ satisfying the conditions
\begin{equation}\label{system 1}
P_{(AW)^q}X=(WAWP_{(AW)^q})^\dag \quad \text{and} \quad \Ra(X)\subseteq\Ra((AW)^q)
\end{equation}
and is given by $X=A^{\diamond_q,W}$.
\end{theorem}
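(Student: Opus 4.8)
The plan is to verify that $X=A^{\diamond_q,W}=(WAWP_{(AW)^q})^\dag$ satisfies both conditions in \eqref{system 1} and then to show that any solution must coincide with it. The whole argument rests on one standard fact about orthogonal projectors: since $P_{(AW)^q}$ is the orthogonal projector onto $\Ra((AW)^q)$, a matrix $X$ satisfies $P_{(AW)^q}X=X$ if and only if $\Ra(X)\subseteq\Ra((AW)^q)$. This equivalence, justified by recalling that $\Ra(P_{(AW)^q})=\Ra((AW)^q)$ and that an orthogonal projector fixes exactly the vectors of its range, is the bridge between the two conditions.

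For existence I would take $X=A^{\diamond_q,W}$. The inclusion $\Ra(X)\subseteq\Ra((AW)^q)$ is precisely Theorem \ref{properties BT}(v), so the second condition of \eqref{system 1} holds at once. For the first condition, Theorem \ref{properties BT}(vi) gives $P_{(AW)^q}A^{\diamond_q,W}=A^{\diamond_q,W}$, and since $A^{\diamond_q,W}=(WAWP_{(AW)^q})^\dag$ by Theorem \ref{existence uniqueness}, I obtain $P_{(AW)^q}X=(WAWP_{(AW)^q})^\dag$, exactly as required.

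For uniqueness I would let $X$ be any matrix satisfying both conditions. The second condition $\Ra(X)\subseteq\Ra((AW)^q)$ is equivalent, by the projector fact above, to $P_{(AW)^q}X=X$. Substituting this into the first condition yields $X=P_{(AW)^q}X=(WAWP_{(AW)^q})^\dag=A^{\diamond_q,W}$, which forces $X$ to be the asserted matrix and completes the proof.

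Since the two essential ingredients, parts (v) and (vi) of Theorem \ref{properties BT}, are already available, I do not anticipate any genuine obstacle; the single point requiring care is the equivalence $\Ra(X)\subseteq\Ra((AW)^q)\Leftrightarrow P_{(AW)^q}X=X$, which must be invoked cleanly in both directions (to supply the first condition in the existence part and to convert the range constraint into a projector identity in the uniqueness part).
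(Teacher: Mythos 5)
Your proposal is correct and follows essentially the same route as the paper: existence via $X=A^{\diamond_q,W}=(WAWP_{(AW)^q})^\dag$ together with parts (v) and (vi) of the properties theorem, and uniqueness by converting $\Ra(X)\subseteq\Ra((AW)^q)$ into $P_{(AW)^q}X=X$ and substituting into the first equation. Your write-up merely makes explicit the projector equivalence that the paper leaves implicit, which is a sound and complete rendering of the same argument.
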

\begin{proof} 
\emph{Existence}. Let $X:=A^{\diamond_q,W}$. From parts (v) and (vi) of Theorem \ref{properties BT} it is clear that $X$ is a solution to \eqref{system 1}.\\
\emph{Uniqueness}. Any matrix $X$ satisfying conditions \eqref{system 1}, in particular satisfies   $\Ra(X)\subseteq\Ra((AW)^q)$ which is equivalent to $P_{(AW)^q}X=X$. Thus, from the condition  $P_{(AW)^q}X=(WAWP_{(AW)^q})^\dag$, we get $X=(WAWP_{(AW)^q})^\dag$, which gives  the conclusion.
\end{proof}

\begin{theorem} Let $A \in \Cm$, $0\neq W\in \Cn$, $k=\max\{\ind(AW), \ind(WA)\}$, and $q\in \mathbb{N}\cup \{0\}$. 
The unique matrix $X$ satisfying the conditions
\begin{equation}\label{system 2}
AWX=AW(WAWP_{(AW)^q})^\dag \quad \text{and} \quad \Ra(X)\subseteq\Ra(P_{(AW)^q}(WAW)^*)
\end{equation}
is given by $X=A^{\diamond_q,W}$.
\end{theorem}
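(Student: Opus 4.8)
The plan is to mirror the two-part existence/uniqueness structure of the preceding theorem. Throughout I would abbreviate $M:=WAWP_{(AW)^q}$, so that $A^{\diamond_q,W}=M^\dag$ by Theorem~\ref{existence uniqueness}, and record the elementary identity $M^*=(WAWP_{(AW)^q})^*=P_{(AW)^q}(WAW)^*$ (orthogonal projectors are Hermitian), which already shows that the two expressions $\Ra(M^*)$ and $\Ra(P_{(AW)^q}(WAW)^*)$ coincide.

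For existence I would simply take $X:=A^{\diamond_q,W}=M^\dag$. The first equation in \eqref{system 2} is then immediate, and the range condition follows from Theorem~\ref{properties BT}(ii), which gives $\Ra(A^{\diamond_q,W})=\Ra(P_{(AW)^q}(WAW)^*)$, so in fact equality (hence the required inclusion) holds.

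For uniqueness, let $X$ be any solution. The first step is to extract two consequences from the range hypothesis $\Ra(X)\subseteq\Ra(P_{(AW)^q}(WAW)^*)$. On one hand, since $\Ra(P_{(AW)^q}(WAW)^*)\subseteq\Ra(P_{(AW)^q})=\Ra((AW)^q)$, we obtain $P_{(AW)^q}X=X$. On the other hand, since $\Ra(P_{(AW)^q}(WAW)^*)=\Ra(M^*)=\Ra(M^\dag)$ and $M^\dag M$ is the orthogonal projector onto $\Ra(M^*)$, we also obtain $M^\dag M X=X$. The second step is to transport the hypothesis on $AWX$ into one on $WAWX$: left-multiplying $AWX=AWM^\dag$ by $W$ yields $WAWX=WAWM^\dag$. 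Combining these,
\[
X = M^\dag M X = M^\dag WAW P_{(AW)^q} X = M^\dag WAW X = M^\dag WAW M^\dag = M^\dag,
\]
where the penultimate equality uses $P_{(AW)^q}X=X$ and the final equality uses the identity $M^\dag WAW M^\dag=M^\dag$ already established (via Lemma~\ref{lemma 1}) in the proof of Theorem~\ref{existence uniqueness}. Hence $X=A^{\diamond_q,W}$.

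The main obstacle is conceptual rather than computational: the hypothesis constrains $AWX$ with only the factor $AW$, not $WAW$, so the argument hinges on the left-multiplication by $W$ to recover $WAWX$, and then on converting $MX=WAWP_{(AW)^q}X$ back into $WAWX$ through the range inclusion $P_{(AW)^q}X=X$. Pinning down these two complementary consequences of the single range condition, together with reusing the projection identity from Theorem~\ref{existence uniqueness}, is the crux of the proof.
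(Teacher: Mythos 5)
Your proof is correct and takes essentially the same route as the paper: existence via Theorem~\ref{properties BT}(ii), and uniqueness by using the range hypothesis to express $X$ through $M^\dag=(WAWP_{(AW)^q})^\dag$, then substituting $AWX=AWM^\dag$ and reusing the Lemma~\ref{lemma 1} identity $M^\dag WAWM^\dag=M^\dag$. The only cosmetic difference is that the paper encodes the range condition by writing $X=M^\dag Z$ for some $Z$ rather than via your projector identity $M^\dag MX=X$; the resulting computational chains are identical.
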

\begin{proof} 
\emph{Existence}. Let $X:=A^{\diamond_q,W}$. By Definition \ref{def wqbt} and Theorem \ref{properties BT} (ii) it is clear that $X$ satisfies both conditions in \eqref{system 2}. \\
\emph{Uniqueness}. Let $X$ be an arbitrary matrix satisfying both conditions in \eqref{system 2}.
Since $\Ra((WAWP_{(AW)^q})^\dag)=\Ra(P_{(AW)^q}(WAW)^*)$, the second condition in \eqref{system 2} implies  $X=(WAWP_{(AW)^q})^\dag Z$ for some matrix $Z$. Now, from Lemma \ref{lemma 1} and the first equation in \eqref{system 2}  we obtain
\begin{eqnarray*}
X &=& (WAWP_{(AW)^q})^\dag Z \\
&=& (WAWP_{(AW)^q})^\dag WAW P_{(AW)^q}(WAWP_{(AW)^q})^\dag Z\\
&=& (WAWP_{(AW)^q})^\dag W AW [(WAWP_{(AW)^q})^\dag Z] \\
  &=&(WAWP_{AW})^\dag W (AWX) \\ 
  &=&(WAWP_{AW})^\dag WAW(WAWP_{(AW)^q})^\dag \\
  &=& (WAWP_{(AW)^q})^\dag WAW P_{(AW)^q}(WAWP_{(AW)^q})^\dag \\
  &=& (WAWP_{(AW)^q})^\dag \\
  &=& A^{\diamond_q,W},
\end{eqnarray*}
which gives the uniqueness.
\end{proof}

A similar result can be obtained using the null space.  

\begin{theorem}  Let $A \in \Cm$, $0\neq W\in \Cn$, $k=\max\{\ind(AW), \ind(WA)\}$, and $q\in \mathbb{N}\cup \{0\}$. The unique matrix $X$ that satisfies both conditions
\begin{equation}\label{system 3}
XWA=(WAWP_{(AW)^q})^\dag W A \quad \text{and}\quad \Nu(P_{(AW)^q}(WAW)^*)\subseteq\Nu(X) 
\end{equation}
is given by $X=A^{\diamond,W}$.
\end{theorem}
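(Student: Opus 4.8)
The plan is to prove this exactly as the two preceding theorems were handled: first verify that $X=A^{\diamond_q,W}$ solves \eqref{system 3}, and then show that any solution must coincide with it. (I note in passing that the displayed conclusion reads $A^{\diamond,W}$, but by the asserted uniqueness and the pattern of the two previous theorems the intended solution is the $W$-weighted $q$-BT inverse $X=A^{\diamond_q,W}$.) This statement is the null-space dual of the theorem built around \eqref{system 2}, so I expect the existence half to be immediate and the uniqueness half to require a short argument that replaces the ``$X=X_0Z$'' factorization used there by a complementary-subspace argument. For existence, writing $X_0:=A^{\diamond_q,W}=(WAWP_{(AW)^q})^\dag$, the first equation in \eqref{system 3} is precisely the second defining equation of the $W$-weighted $q$-BT inverse in \eqref{system wqbt} (it holds because $X_0=(WAWP_{(AW)^q})^\dag$), while the inclusion $\Nu(P_{(AW)^q}(WAW)^*)\subseteq\Nu(X_0)$ holds because Theorem \ref{properties BT}(ii) gives the \emph{equality} $\Nu(X_0)=\Nu(P_{(AW)^q}(WAW)^*)$.

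For uniqueness I would let $X$ be any solution of \eqref{system 3} and set $D:=X-X_0$. The first equation gives $XWA=X_0WA$, hence $DWA=0$, i.e. $\Ra(WA)\subseteq\Nu(D)$. From the inclusion in \eqref{system 3} together with the identity $\Nu(X_0)=\Nu(P_{(AW)^q}(WAW)^*)$ of Theorem \ref{properties BT}(ii), every $v\in\Nu(X_0)$ satisfies $X_0v=0$ and $Xv=0$, so $Dv=0$; thus $\Nu(X_0)\subseteq\Nu(D)$. Consequently $D$ annihilates the whole subspace $\Ra(WA)+\Nu(X_0)$, and the proof reduces to the single claim
\[ \Ra(WA)+\Nu(X_0)=\C^n. \]

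The crux — and the only genuinely nontrivial step — is this decomposition. I would establish it by passing to orthogonal complements: it is equivalent to $\Ra(WA)^\perp\cap\Nu(X_0)^\perp=\{0\}$. Here $\Ra(WA)^\perp=\Nu((WA)^*)=\Nu(A^*W^*)$, while Theorem \ref{properties BT}(ii) yields $\Nu(X_0)^\perp=\Ra((P_{(AW)^q}(WAW)^*)^*)=\Ra(WAWP_{(AW)^q})$. Now if $y$ lies in this intersection, then $y=WAWP_{(AW)^q}z$ for some $z$, so the factorization $y=WA\,(WP_{(AW)^q}z)$ places $y\in\Ra(WA)$; simultaneously $A^*W^*y=0$ places $y\in\Nu((WA)^*)=\Ra(WA)^\perp$. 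Hence $y\in\Ra(WA)\cap\Ra(WA)^\perp=\{0\}$, which proves the claim. It then follows that $\Nu(D)=\C^n$, so $D=0$ and $X=X_0=A^{\diamond_q,W}$, completing the argument.
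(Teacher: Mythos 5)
Your proof is correct, and you were right to flag the misprint: the unique solution should read $A^{\diamond_q,W}$, not $A^{\diamond,W}$ (the paper slips into the same notational inconsistency in nearby proofs as well). Note that the paper gives no proof of this theorem at all --- it is stated immediately after the remark that a similar result can be obtained using the null space, the intended argument being the exact dual of the uniqueness proof for \eqref{system 2}: the hypothesis $\Nu(P_{(AW)^q}(WAW)^*)=\Nu\bigl((WAWP_{(AW)^q})^\dag\bigr)\subseteq\Nu(X)$ is equivalent to a factorization $X=Z(WAWP_{(AW)^q})^\dag$ for some $Z$, whence $X=X\,WAWP_{(AW)^q}(WAWP_{(AW)^q})^\dag$, and substituting the first equation of \eqref{system 3} and invoking Lemma \ref{lemma 1} collapses this to $(WAWP_{(AW)^q})^\dag WAWP_{(AW)^q}(WAWP_{(AW)^q})^\dag=(WAWP_{(AW)^q})^\dag$. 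Your uniqueness argument takes a genuinely different route: rather than factoring $X$ through $(WAWP_{(AW)^q})^\dag$, you show the difference $D=X-A^{\diamond_q,W}$ annihilates $\Ra(WA)+\Nu(A^{\diamond_q,W})$ and that this sum is all of $\C^n$; every step checks out, including the identifications $\Nu(A^{\diamond_q,W})^\perp=\Ra(WAWP_{(AW)^q})$ and the trivial-intersection step via $\Ra(WAWP_{(AW)^q})\subseteq\Ra(WA)$ and $\Ra(WA)\cap\Ra(WA)^\perp=\{0\}$. The factorization route is shorter and stylistically uniform with the preceding theorem; your route costs a few extra lines but makes the mechanism of uniqueness transparent --- the equation prescribes $X$ on $\Ra(WA)$, the null-space condition prescribes it on $\Nu(A^{\diamond_q,W})$, and these two subspaces together span $\C^n$ (not necessarily as a direct sum, which your argument correctly never needs).
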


As a consequence of above results we obtain some characterizations of the $q$-BT inverse of a square matrix. 

\begin{theorem} Let $A\in\Cnn$.  The following statements are equivalent:
\begin{enumerate}[{\rm (i)}]
\item $X$ is the $q$-BT inverse of $A$;
\item $XAX=X$, $AX=A(AP_{A^q})^\dag$, and $XA=(AP_{A^q})^\dag A$;
\item $P_{A^q}X=(AP_{A^q})^\dag$ and $\Ra(X)\subseteq \Ra(A^q)$;
\item $AX=A(AP_{A^q})^\dag$ and $\Ra(X)\subseteq \Ra(P_{A^q}A^*)$;
\item $XA=(AP_{A^q})^\dag A$ and $\Nu(P_{A^q}A^*)\subseteq \Nu(X)$.
\end{enumerate}
\end{theorem}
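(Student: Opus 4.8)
The plan is to recognize this theorem as the square-matrix specialization ($m=n$, $W=I_n$) of the four preceding existence-and-uniqueness results, so that no new computation is needed beyond substituting $W=I_n$ into each system. The first step is to record that under this specialization the $W$-weighted $q$-BT inverse collapses to the $q$-BT inverse: by Remark \ref{rem q-BT} it reduces to $A^{\diamond_q}=(AP_{A^q})^\dag$, while the projector $P_{(AW)^q}$ becomes $P_{A^q}$ and the product $WAW$ becomes $A$. Consequently, statement (i) — that $X$ is the $q$-BT inverse of $A$ — is simply the assertion $X=(AP_{A^q})^\dag$, and each equivalence will be obtained by showing that the corresponding condition set forces exactly this value of $X$.

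For (i)$\Leftrightarrow$(ii), I would substitute $W=I_n$ directly into system \eqref{system wqbt}; the three equations become $XAX=X$, $XA=(AP_{A^q})^\dag A$, and $AX=A(AP_{A^q})^\dag$, which are precisely the equations of (ii). Theorem \ref{existence uniqueness} then guarantees that this system is consistent with the unique solution $X=(AP_{A^q})^\dag$, so (ii) holds if and only if $X=A^{\diamond_q}$, i.e. if and only if (i) holds.

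The remaining three equivalences follow the same template, applied to the three characterization theorems of this section. Setting $W=I_n$ in \eqref{system 1} yields $P_{A^q}X=(AP_{A^q})^\dag$ together with $\Ra(X)\subseteq\Ra(A^q)$, which is exactly (iii); in \eqref{system 2} it yields $AX=A(AP_{A^q})^\dag$ together with $\Ra(X)\subseteq\Ra(P_{A^q}A^*)$, which is (iv); and in \eqref{system 3} it yields $XA=(AP_{A^q})^\dag A$ together with $\Nu(P_{A^q}A^*)\subseteq\Nu(X)$, which is (v). Since each of those theorems asserts that its system has the unique solution $A^{\diamond_q,W}$, each specialized system has the unique solution $A^{\diamond_q}$, giving (i)$\Leftrightarrow$(iii), (i)$\Leftrightarrow$(iv), and (i)$\Leftrightarrow$(v). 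Chaining these through (i) delivers the full equivalence of (i)--(v).

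I do not expect a genuine obstacle here, since the theorem is a corollary of the already-proved general statements; the only point requiring care is the faithful bookkeeping of the specialization — in particular verifying that every occurrence of $P_{(AW)^q}$, $WAW$, and the range and null-space conditions reduces correctly when $W=I_n$, and that it is the uniqueness part of each general theorem (not merely existence) that licenses the implication from each condition set back to $X=A^{\diamond_q}$.
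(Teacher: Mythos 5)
Your proposal is correct and is exactly the argument the paper intends: the theorem is stated there without proof as a direct consequence of Theorem \ref{existence uniqueness} and the three characterization theorems of Section 3, obtained by setting $m=n$ and $W=I_n$ so that $WAW=A$, $P_{(AW)^q}=P_{A^q}$, and $A^{\diamond_q,W}=A^{\diamond_q}=(AP_{A^q})^\dag$. Your added care about invoking the uniqueness (not just existence) part of each general result is precisely the point that makes the specialization legitimate.
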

\section{Canonical form of the $W$-weighted $q$-BT inverse}
In \cite{FeLeTh1} the authors introduced a simultaneous unitary block upper triangularization of a pair of rectangular matrices, called the weighted core-EP decomposition of the pair $(A,W)$. More precisely,  we have the following result:
\begin{theorem}\label{decomposition}
Let $A\in\Cm$ and $0\neq W\in \Cn$ with  $k=\max\{\ind(AW), \ind(WA)\}$.
 Then there exist two unitary matrices
 $U \in \Cmm$, $V \in \Cnn$, two nonsingular matrices $A_1, W_1 \in \mathbb{C}^{t \times t}$, and two matrices $A_3 \in
\mathbb{C}^{(m-t)\times(n-t)}$ and $W_3 \in
\mathbb{C}^{(n-t)\times(m-t)}$ such that $A_3W_3$ and $W_3A_3$ are
nilpotent of indices $\ind(AW)$ and  $\ind(WA)$,
respectively, with
\begin{equation} \label{weighted decomposition}
A = U\left[\begin{array}{cc}
A_1 & A_2 \\
0 & A_3
\end{array}\right]V^* \quad \text{and} \quad
W = V\left[\begin{array}{cc}
W_1 & W_2 \\
0 & W_3
\end{array}\right]U^*.
\end{equation}
\end{theorem}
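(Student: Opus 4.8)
The plan is to build the two unitary matrices directly from the column spaces of the stabilized powers of $AW$ and $WA$, and then read off the block structure. First I would set $t:=\ra((AW)^k)$. A preliminary point that must be checked is that $\ra((AW)^k)=\ra((WA)^k)$: this holds because $AW$ and $WA$ share the same nonzero eigenvalues with the same algebraic multiplicities, so for exponents at least $k$ both ranks equal the number of nonzero eigenvalues counted with multiplicity. This guarantees that $\Ra((AW)^k)\subseteq\C^m$ and $\Ra((WA)^k)\subseteq\C^n$ have the common dimension $t$, which is precisely the size needed for the blocks $A_1,W_1$. I would then choose a unitary $U=[U_1\ U_2]\in\Cmm$ whose first $t$ columns form an orthonormal basis of $\Ra((AW)^k)$, and a unitary $V=[V_1\ V_2]\in\Cnn$ whose first $t$ columns form an orthonormal basis of $\Ra((WA)^k)$.

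Writing $M:=U^*AV$ and $K:=V^*WU$ gives $A=UMV^*$ and $W=VKU^*$; partitioning $M$ and $K$ conformally with $\C^m=\Ra(U_1)\oplus\Ra(U_2)$ and $\C^n=\Ra(V_1)\oplus\Ra(V_2)$, the content of Theorem \ref{decomposition} is that the $(2,1)$ blocks vanish and that the diagonal blocks have the stated character. The vanishing of the $(2,1)$ block of $M$ is equivalent to $A\,\Ra(V_1)\subseteq\Ra(U_1)$, i.e. $A\,\Ra((WA)^k)\subseteq\Ra((AW)^k)$, and this is immediate from the commutation identity \eqref{w-product}, since $A(WA)^k=(AW)^k A$. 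Symmetrically, the $(2,1)$ block of $K$ vanishes because the companion identity $W(AW)^k=(WA)^k W$ (the same relation applied to the pair $(W,A)$) yields $W\,\Ra((AW)^k)\subseteq\Ra((WA)^k)$. With the zero blocks in place, multiplying out gives $AW=U\left[\begin{smallmatrix}A_1W_1 & * \\ 0 & A_3W_3\end{smallmatrix}\right]U^*$ and $WA=V\left[\begin{smallmatrix}W_1A_1 & * \\ 0 & W_3A_3\end{smallmatrix}\right]V^*$.

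For the nonsingularity of $A_1$ and $W_1$ I would use that $k\ge\ind(AW)$ forces $\Ra((AW)^{k+1})=\Ra((AW)^k)$, so $AW$ maps $S:=\Ra(U_1)$ onto itself; being a surjective endomorphism of a finite-dimensional space it is bijective on $S$, and its matrix on $S$ is exactly $A_1W_1$, which is therefore nonsingular. Since $A_1W_1$ is a nonsingular square product, both $A_1$ and $W_1$ are nonsingular, and the mirror argument for $WA$ confirms $W_1A_1$ is nonsingular as well. For the nilpotency I would use the rank identity $\ra((AW)^j)=t+\ra((A_3W_3)^j)$, valid for every $j$, obtained by clearing the off-diagonal block through column operations (legitimate because $A_1W_1$ is nonsingular, so $(A_1W_1)^j$ is too). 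Taking $j=k$ gives $\ra((A_3W_3)^k)=0$, so $A_3W_3$ is nilpotent, and comparing ranks at consecutive exponents shows that the least $j$ with $(A_3W_3)^j=0$ coincides with the least $j$ with $\ra((AW)^j)=\ra((AW)^{j+1})$, namely $\ind(AW)$; the symmetric argument gives $\ind(W_3A_3)=\ind(WA)$.

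The step I expect to be the main obstacle is the bookkeeping that makes a \emph{single} pair $(U,V)$ triangularize $A$ and $W$ simultaneously while pinning the two nilpotency indices to $\ind(AW)$ and $\ind(WA)$ \emph{exactly} rather than merely bounding them by $k$. The two facts that make this consistent are the commutation relation \eqref{w-product}, which forces both $(2,1)$ blocks to vanish under the same choice of subspaces, and the equality $\ra((AW)^k)=\ra((WA)^k)$, which makes the top-left blocks on the two sides share the common order $t$.
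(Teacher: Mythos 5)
Your proof is correct, but note that there is nothing in the paper to compare it against: the paper does not prove Theorem \ref{decomposition} at all, it simply imports the weighted core-EP decomposition from \cite{FeLeTh1}, where it was originally established. Your blind reconstruction is essentially the standard argument behind that result, and every step checks out. The equality $\ra((AW)^k)=\ra((WA)^k)$ holds as you say because $AW$ and $WA$ share the same nonzero eigenvalues with the same algebraic multiplicities (one could alternatively get it more elementarily from $\ra((AW)^{k+1})=\ra(A(WA)^kW)\le\ra((WA)^k)$ and its mirror, using rank stabilization); the vanishing of both $(2,1)$ blocks under the single choice $\Ra(U_1)=\Ra((AW)^k)$, $\Ra(V_1)=\Ra((WA)^k)$ follows exactly as you argue from $A(WA)^k=(AW)^kA$ and $W(AW)^k=(WA)^kW$; the nonsingularity of $A_1W_1$ from surjectivity of $AW$ on $\Ra((AW)^k)$ is sound, and since $A_1,W_1$ are square of the same order $t$, each factor is then invertible; and the rank identity $\ra((AW)^j)=t+\ra((A_3W_3)^j)$, legitimate because $(A_1W_1)^j$ is nonsingular and the off-diagonal block can be cleared by a unimodular column operation, pins the nilpotency index of $A_3W_3$ to exactly $\ind(AW)$, since $j\mapsto\ra((AW)^j)$ strictly decreases below the index and equals $t$ at and beyond it; the symmetric bookkeeping for $W_3A_3$ gives $\ind(WA)$. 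The only cosmetic remarks: the degenerate cases $t=0$ ($AW$ nilpotent, so $A_1,W_1$ are empty) and $t=m$ or $t=n$ (some blocks empty) are handled vacuously by your construction, and your final claim that $W_1A_1$ needs a separate ``mirror argument'' is redundant, as it is automatic once $A_1$ and $W_1$ are individually nonsingular. In short: a complete, self-contained proof of a theorem the paper only cites.
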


The following lemma allows us to find the Moore-Penrose inverse of a partitioned matrix with some of its diagonal block nonsingular.

\begin{lemma}\cite{FeThTo}\label{MP triangular} 
Let
$A=U\begin{bmatrix}
        A_1 & A_2 \\
        0 & A_3 \\
      \end{bmatrix}V^* \in \Cm$ be  such that  $A_1 \in \C^{t\times t}$ is nonsingular and $U \in {\mathbb C}^{m \times m}$ and $V \in {\mathbb C}^{n \times n}$ are unitary. Then
\begin{equation} \label{moore penrose representacion}
A^\dag = V\left[\begin{array}{cc}
A_1^*\Omega & -A_1^*\Omega A_2 A_3^\dagger\\
(I_{n-t}-Q_{A_3})A_2^*\Omega & A_3^\dagger-(I_{n-t}-Q_{A_3})A_2^*\Omega A_2 A_3^\dagger
\end{array}\right]U^*,
\end{equation}
where $\Omega=[A_1 A_1^*+ A_2(I_{n-t}-Q_{A_3})A_2^*]^{-1}$.
In consequence, 
\begin{equation} \label{PA}
P_A= U\left[\begin{array}{cc}
I_t & 0\\
0 & P_{A_3}
\end{array}\right]U^*.
\end{equation} 
\end{lemma}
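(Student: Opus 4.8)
The plan is to exploit the unitary invariance of the Moore--Penrose inverse to peel off $U$ and $V$, and then verify that the proposed block matrix satisfies the four Penrose equations for the middle factor. Writing $M=\begin{bmatrix}A_1 & A_2\\ 0 & A_3\end{bmatrix}$, so that $A=UMV^{*}$ with $U,V$ unitary, the identity $(UMV^{*})^{\dagger}=VM^{\dagger}U^{*}$ reduces the claim \eqref{moore penrose representacion} to showing that $X:=\begin{bmatrix} A_1^{*}\Omega & -A_1^{*}\Omega A_2A_3^{\dagger}\\ (I_{n-t}-Q_{A_3})A_2^{*}\Omega & A_3^{\dagger}-(I_{n-t}-Q_{A_3})A_2^{*}\Omega A_2A_3^{\dagger}\end{bmatrix}$ equals $M^{\dagger}$, where throughout $Q_{A_3}=A_3^{\dagger}A_3$ and $P_{A_3}=A_3A_3^{\dagger}$ denote the orthogonal projectors onto $\Ra(A_3^{*})$ and $\Ra(A_3)$.

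Before the verification I would record two preliminary facts. First, $\Omega$ is well defined and Hermitian: its inverse $A_1A_1^{*}+A_2(I_{n-t}-Q_{A_3})A_2^{*}$ is the sum of the positive definite matrix $A_1A_1^{*}$ (recall $A_1$ is nonsingular) and the positive semidefinite matrix $A_2(I_{n-t}-Q_{A_3})A_2^{*}$, hence positive definite and invertible, with $\Omega^{*}=\Omega$. It is in fact the Schur complement of the $(2,2)$ block $A_3A_3^{*}$ in $MM^{*}$, which explains its shape. Second, I would collect the projector identities $A_3Q_{A_3}=A_3$, $A_3(I_{n-t}-Q_{A_3})=0$, $Q_{A_3}A_3^{\dagger}=A_3^{\dagger}$, $P_{A_3}A_3=A_3$, and $A_3^{\dagger}P_{A_3}=A_3^{\dagger}$, together with the Hermitian idempotency of $P_{A_3}$ and $Q_{A_3}$; these are the only cancellations the computation needs.

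The core step is a block computation of $MX$. Expanding the four blocks, the $(1,1)$ entry collapses to $[A_1A_1^{*}+A_2(I_{n-t}-Q_{A_3})A_2^{*}]\Omega=\Omega^{-1}\Omega=I_{t}$, the $(1,2)$ entry vanishes after the same $\Omega^{-1}\Omega$ cancellation, the $(2,1)$ entry vanishes because $A_3(I_{n-t}-Q_{A_3})=0$, and the $(2,2)$ entry reduces to $A_3A_3^{\dagger}=P_{A_3}$. Thus $MX=\begin{bmatrix} I_t & 0\\ 0 & P_{A_3}\end{bmatrix}$, which is Hermitian (third Penrose equation) and, after right multiplication by $M$ using $P_{A_3}A_3=A_3$, gives $MXM=M$ (first Penrose equation). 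This single computation also yields the ``in consequence'' part, since $P_A=AA^{\dagger}=U(MX)U^{*}=U\begin{bmatrix} I_t & 0\\ 0 & P_{A_3}\end{bmatrix}U^{*}$, which is \eqref{PA}. For the remaining two equations I would compute $XM$ blockwise: its diagonal blocks $A_1^{*}\Omega A_1$ and $(I_{n-t}-Q_{A_3})A_2^{*}\Omega A_2(I_{n-t}-Q_{A_3})+Q_{A_3}$ are Hermitian, while the off-diagonal blocks $A_1^{*}\Omega A_2(I_{n-t}-Q_{A_3})$ and $(I_{n-t}-Q_{A_3})A_2^{*}\Omega A_1$ are mutual conjugate transposes, so $XM$ is Hermitian (fourth Penrose equation). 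Finally $XMX=X(MX)=X\begin{bmatrix} I_t & 0\\ 0 & P_{A_3}\end{bmatrix}=X$ follows from $A_3^{\dagger}P_{A_3}=A_3^{\dagger}$ (second Penrose equation), so $X=M^{\dagger}$ and $A^{\dagger}=VXU^{*}$, establishing \eqref{moore penrose representacion}.

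The computation is genuinely routine once the projector identities and the Hermitian symmetry of $\Omega$ are in place; the only step requiring real care is the fourth Penrose equation, where showing that the off-diagonal blocks of $XM$ are mutual conjugate transposes hinges precisely on $\Omega^{*}=\Omega$ and on $Q_{A_3}$ being self-adjoint. I would therefore make sure the well-definedness and Hermiticity of $\Omega$ obtained in the preliminary step are invoked explicitly there.
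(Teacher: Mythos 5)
Your proof is correct and complete: all four block computations check out, and the Hermiticity of $\Omega$ (positive definiteness of $A_1A_1^*+A_2(I_{n-t}-Q_{A_3})A_2^*$) together with the projector identities $A_3(I_{n-t}-Q_{A_3})=0$ and $A_3^\dagger P_{A_3}=A_3^\dagger$ are exactly the facts needed, with $MX=\left[\begin{smallmatrix} I_t & 0\\ 0 & P_{A_3}\end{smallmatrix}\right]$ also delivering \eqref{PA} at no extra cost. Note that the paper gives no proof of this lemma at all—it is imported from \cite{FeThTo}—so your direct verification of the Penrose equations for the middle factor, after peeling off $U$ and $V$ by unitary invariance, is the standard and expected argument rather than a deviation from the paper.
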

Now, we present a representation for the $W$-weighted $q$-BT inverses by using the weighted core-EP decomposition.  

\begin{theorem}\label{weighted qBT representation} 
Let $A \in \Cm$, $0\neq W\in \Cn$, $k=\max\{\ind(AW), \ind(WA)\}$, and $q\in \mathbb{N}\cup \{0\}$. If $A$ and $W$ are written  as in  (\ref{weighted decomposition}), then  the $W$-weighted $q$-BT  inverse of $A$ is given by
\begin{equation}\label{canonical q-BT}
 A^{\diamond_q,W} 
= U\left[\begin{array}{cc}
 (W_1A_1W_1)^* \Omega_W & -(W_1A_1W_1)^* \Omega_W M A_3^{\diamond_q,W_3} \\
(P_{(A_3W_3)^q}-P_{A_3^{\diamond_q,W_3}})M^*\Omega_W & A_3^{\diamond_q,W_3}-(P_{(A_3W_3)^q}-P_{A_3^{\diamond_q,W_3}})M^*\Omega_W M A_3^{\diamond_q,W_3}
\end{array}\right]V^*,
\end{equation}
where 
$$
M:=W_1A_1W_2+W_1A_2W_3+W_2A_3W_3
$$ 
and
$$
\Omega_W:=[W_1A_1W_1 (W_1A_1W_1)^*+ M (P_{(A_3W_3)^q}-P_{A_3^{\diamond_q,W_3}})M^*]^{-1}.
$$
\end{theorem}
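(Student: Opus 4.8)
The plan is to evaluate the closed form $A^{\diamond_q,W}=(WAWP_{(AW)^q})^\dag$ furnished by Theorem \ref{existence uniqueness} directly in the block coordinates supplied by the weighted core-EP decomposition \eqref{weighted decomposition}, and then read off the four blocks from a single application of the Moore--Penrose formula of Lemma \ref{MP triangular}. First I would multiply the factored forms of $A$ and $W$ to obtain
\[
AW=U\begin{bmatrix} A_1W_1 & A_1W_2+A_2W_3\\ 0 & A_3W_3\end{bmatrix}U^*,
\]
whose $(1,1)$ block $A_1W_1$ is nonsingular and whose $(2,2)$ block $A_3W_3$ is nilpotent. Raising to the $q$-th power preserves the block-triangular shape with diagonal blocks $(A_1W_1)^q$ (still nonsingular) and $(A_3W_3)^q$, so equation \eqref{PA} of Lemma \ref{MP triangular} applies to $(AW)^q$ and yields the orthogonal projector
\[
P_{(AW)^q}=U\begin{bmatrix} I_t & 0\\ 0 & P_{(A_3W_3)^q}\end{bmatrix}U^*.
\]

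Next I would compute $WAW=V\big[\,W_1A_1W_1\;\;M;\;0\;\;W_3A_3W_3\,\big]U^*$ (block upper triangular), where $M$ is exactly the $(1,2)$ block stated in the theorem, and then post-multiply by $P_{(AW)^q}$ to get
\[
WAWP_{(AW)^q}=V\begin{bmatrix} W_1A_1W_1 & MP_{(A_3W_3)^q}\\ 0 & W_3A_3W_3P_{(A_3W_3)^q}\end{bmatrix}U^*.
\]
The crucial observation is that the $(2,2)$ block is precisely $N:=W_3A_3W_3P_{(A_3W_3)^q}$, which by Theorem \ref{existence uniqueness} applied to the pair $(A_3,W_3)$ satisfies $N^\dag=A_3^{\diamond_q,W_3}$. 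Since the $(1,1)$ block $W_1A_1W_1$ is nonsingular, Lemma \ref{MP triangular} (with the roles of the two unitary matrices interchanged, as the matrix is of the form $V(\cdot)U^*$) applies to $G:=WAWP_{(AW)^q}$ and produces $G^\dag=A^{\diamond_q,W}$ as a $U(\cdot)V^*$ block matrix built from $G_1^*=(W_1A_1W_1)^*$, $G_3^\dag=A_3^{\diamond_q,W_3}$, $Q_{G_3}=G_3^\dag G_3$, and $\Omega=[G_1G_1^*+G_2(I-Q_{G_3})G_2^*]^{-1}$.

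The main work---and the step I expect to be the real obstacle---is to simplify these blocks to the form in \eqref{canonical q-BT}, which reduces to a few projector identities for the pair $(A_3,W_3)$. From Theorem \ref{properties BT}(v) one has $\Ra(A_3^{\diamond_q,W_3})\subseteq\Ra((A_3W_3)^q)$, hence $P_{(A_3W_3)^q}P_{A_3^{\diamond_q,W_3}}=P_{A_3^{\diamond_q,W_3}}$; from Theorem \ref{properties BT}(vi) one has $P_{(A_3W_3)^q}A_3^{\diamond_q,W_3}=A_3^{\diamond_q,W_3}$; and since $\Ra(N^\dag)=\Ra(N^*)$ one may identify $Q_{G_3}=N^\dag N=A_3^{\diamond_q,W_3}(A_3^{\diamond_q,W_3})^\dag=P_{A_3^{\diamond_q,W_3}}$. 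Writing $R:=P_{(A_3W_3)^q}-P_{A_3^{\diamond_q,W_3}}$, these give $G_2G_3^\dag=MP_{(A_3W_3)^q}A_3^{\diamond_q,W_3}=MA_3^{\diamond_q,W_3}$, the factor $G_2(I-Q_{G_3})=MP_{(A_3W_3)^q}(I-P_{A_3^{\diamond_q,W_3}})=MR$, and, using $RP_{(A_3W_3)^q}=R$, the identity $G_2(I-Q_{G_3})G_2^*=MRM^*$, so that $\Omega=\Omega_W$. Substituting into the four blocks $G_1^*\Omega$, $-G_1^*\Omega G_2G_3^\dag$, $(I-Q_{G_3})G_2^*\Omega$, and $G_3^\dag-(I-Q_{G_3})G_2^*\Omega G_2G_3^\dag$ of Lemma \ref{MP triangular} then reproduces \eqref{canonical q-BT} exactly. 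The only delicate bookkeeping is tracking the conjugate transposes, e.g.\ $(I-Q_{G_3})G_2^*=(G_2(I-Q_{G_3}))^*=RM^*$ using that $R$ is Hermitian, and recognizing the self-similar block $N$ as the $W_3$-weighted $q$-BT input, which is what makes the recursive structure of the decomposition close.
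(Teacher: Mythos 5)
Your proposal is correct and follows essentially the same route as the paper's proof: express $A^{\diamond_q,W}=(WAWP_{(AW)^q})^\dag$ in the coordinates of \eqref{weighted decomposition}, apply Lemma \ref{MP triangular} to the block-triangular matrix $WAWP_{(AW)^q}$ with nonsingular $(1,1)$ block $W_1A_1W_1$, identify the $(2,2)$ block's Moore--Penrose inverse as $A_3^{\diamond_q,W_3}$, and simplify the projector combinations to obtain $\Omega_W$ and the four stated blocks. The only cosmetic difference is that where the paper reduces $Z=P_{(A_3W_3)^q}(I-Q_{G_3})P_{(A_3W_3)^q}$ via Lemma \ref{lemma 1}, you reach the same identity $Q_{G_3}=P_{A_3^{\diamond_q,W_3}}$ through $N^\dag N=N^\dag(N^\dag)^\dag$ together with Theorem \ref{properties BT}(v)--(vi), which is an equivalent computation.
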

\begin{proof}
We assume that $A$ and $W$ are written as in (\ref{weighted decomposition}). Applying Theorem \ref{existence uniqueness}, we have $A^{\diamond_q,W} = (WAWP_{(AW)^q})^\dag$. It can be easily obtained that  
\[
WAW=V \left[\begin{array}{cc}
W_1A_1W_1 & W_1A_1W_2+(W_1A_2+W_2A_3)W_3 \\
0 & W_3A_3W_3
\end{array}
\right]U^*=V \left[\begin{array}{cc}
W_1A_1W_1 & M \\
0 & W_3A_3W_3
\end{array}
\right]U^*,
\]
where $M:= W_1A_1W_2+W_1A_2W_3+W_2A_3W_3$, and
\[
P_{(AW)^q} = U\left[\begin{array}{cc}
I_t & 0 \\
0 & P_{(A_3W_3)^q}
\end{array}\right]U^*.
\]
Thus, we have that
\begin{equation*}
A^{\diamond_q,W} = (WAWP_{(AW)^q})^\dag
= 
U\left[\begin{array}{cc}
W_1 A_1 W_1 & M P_{(A_3W_3)^q}\\
0 & W_3A_3W_3 P_{(A_3W_3)^q}
\end{array}\right]^\dag V^* = U \left[\begin{array}{cc}
B_1 & B_2\\
B_3 & B_4
\end{array}\right] V^*,
\end{equation*}
where we are considering the partition given by the blocks $B_1,B_2,B_3$ and $B_4$ having appropriate sizes induced by the central matrix in the previous step.
By Theorem \ref{decomposition}, $W_1A_1W_1$ is nonsingular. In order to determine the blocks $B_1$, $B_2$, $B_3$, and $B_4$ we will use Lemma \ref{MP triangular}.
Taking $Z:=P_{(A_3W_3)^q}(I_{n-t}-Q_{W_3A_3W_3P_{(A_3W_3)^q}})P_{(A_3W_3)^q}$, we get
\begin{equation}\label{omega}
\Omega_W=[W_1A_1W_1 (W_1A_1W_1)^*+MZM^*]^{-1}.
\end{equation}
Moreover, from Lemma \ref{lemma 1} and Theorem \ref{existence uniqueness}, it follows 
\begin{eqnarray}\label{auxiliar}
Z &= &
(P_{(A_3W_3)^q})^2-[P_{(A_3W_3)^q} (W_3A_3W_3P_{(A_3W_3)^q})^\dag] W_3A_3W_3(P_{(A_3W_3)^q})^2 \nonumber \\
&= & P_{(A_3W_3)^q}-[W_3A_3W_3P_{(A_3W_3)^q}]^\dag W_3A_3W_3P_{(A_3W_3)^q}  \nonumber \\ 
&= & P_{(A_3W_3)^q}-A_3^{\diamond_q,W_3} [A_3^{\diamond_q,W_3}]^\dag \nonumber \\ 
&=& P_{(A_3W_3)^q}-P_{A_3^{\diamond_q,W_3}}.
\end{eqnarray}
From \eqref{auxiliar} and \eqref{omega} we have
\[\Omega_W=[W_1A_1W_1 (W_1A_1W_1)^*+ M (P_{(A_3W_3)^q}-P_{A_3^{\diamond_q,W_3}})M^*]^{-1}.\]
Finally, 
\begin{eqnarray*}
B_1 &=&(W_1A_1W_1)^* \Omega_W, \\ 
B_2 &=&-(W_1A_1W_1)^* \Omega_W M P_{(A_3W_3)^q} (W_3A_3W_3P_{(A_3W_3)^q})^\dag =\\
& =&  -(W_1A_1W_1)^* \Omega_W M A_3^{\diamond_q,W_3},\\
B_3&=&(I_{n-t}-Q_{W_3A_3W_3P_{(A_3W_3)^q}})(MP_{(A_3W_3)^q})^*\Omega_W = \\
&=&(P_{(A_3W_3)^q}-P_{A_3^{\diamond_q,W_3}})M^*\Omega_W,\\
B_4 &=&A_3^{\diamond_q,W_3}- B_3 M P_{(A_3W_3)^q} (W_3A_3W_3P_{(A_3W_3)^q})^\dag= \\
& =& A_3^{\diamond_q,W_3}- (P_{(A_3W_3)^q}-P_{A_3^{\diamond_q,W_3}})M^*\Omega_W M A_3^{\diamond_q,W_3},
\end{eqnarray*}
which completes the proof.
\end{proof}

\begin{corollary} \label{coro coincides} Let $A \in \Cm$, $0\neq W\in \Cn$, and $k=\max\{\ind(AW), \ind(WA)\}$. If $A$ and $W$ are written  as in  (\ref{weighted decomposition}), then the $W$-weighted BT inverse of $A$ is given by 
\begin{equation}\label{weighted BT}
A^{\diamond_1,W}=A^{\diamond,W} 
= U\left[\begin{array}{cc}
 (W_1A_1W_1)^* \Omega_W & -(W_1A_1W_1)^* \Omega_W M  A_3^{\diamond,W_3} \\
(P_{A_3W_3}-P_{A_3^{\diamond,W_3}})M^*\Omega_W & A_3^{\diamond,W_3}-(P_{A_3W_3}-P_{A_3^{\diamond,W_3}})M^*\Omega_W M A_3^{\diamond,W_3}
\end{array}\right]V^*, 
\end{equation}
where 
\begin{eqnarray*}
M &=& W_1A_1W_2+(W_1A_2+W_2A_3)W_3, \quad \text{and} \\
\Omega_W &=& [W_1A_1W_1 (W_1A_1W_1)^*+ M(P_{A_3W_3}- P_{A_3^{\diamond,W_3}})M^*]^{-1},
\end{eqnarray*}
and the $W$-weighted core-EP inverse of $A$ is given by 
\begin{equation}\label{weighted core ep}
 A^{\diamond_q,W}=A^{\odagger,W}= U\left[\begin{array}{cc}
(W_1A_1W_1)^{-1} & 0 \\
0 & 0
\end{array}\right]V^*, \quad \text{ for } q\ge k.
\end{equation}
\end{corollary}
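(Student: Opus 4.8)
The plan is to read off both formulas directly from the general canonical representation \eqref{canonical q-BT} of Theorem \ref{weighted qBT representation} by specializing the value of $q$ and invoking the coincidences recorded in Corollary \ref{corolario 1}. No independent computation should be required; everything reduces to substituting the correct value of $q$ and then simplifying the four blocks $B_1,\dots,B_4$ together with the matrix $\Omega_W$.

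For the $W$-weighted BT inverse I would set $q=1$ in \eqref{canonical q-BT}. Corollary \ref{corolario 1}(ii) gives $A^{\diamond_1,W}=A^{\diamond,W}$, and the same identity applied to the trailing block yields $A_3^{\diamond_1,W_3}=A_3^{\diamond,W_3}$. Since $P_{(A_3W_3)^1}=P_{A_3W_3}$ and $P_{A_3^{\diamond_1,W_3}}=P_{A_3^{\diamond,W_3}}$, replacing these expressions throughout the blocks and in $\Omega_W$ turns \eqref{canonical q-BT} verbatim into \eqref{weighted BT}. This part is purely a rewriting and introduces no new work.

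For the $W$-weighted core-EP inverse I would instead take $q\ge k$, where Corollary \ref{corolario 1}(iii) already guarantees $A^{\diamond_q,W}=A^{\odagger,W}$. The decisive structural fact, supplied by Theorem \ref{decomposition}, is that $A_3W_3$ is nilpotent of index $\ind(AW)\le k$; hence $(A_3W_3)^q=0$ for every $q\ge k$, so $P_{(A_3W_3)^q}=0$. This in turn forces $A_3^{\diamond_q,W_3}=(W_3A_3W_3P_{(A_3W_3)^q})^\dag=0$ and therefore $P_{A_3^{\diamond_q,W_3}}=0$. With these vanishings, $\Omega_W$ collapses to $[W_1A_1W_1(W_1A_1W_1)^*]^{-1}$, so that $B_1=(W_1A_1W_1)^*\Omega_W=(W_1A_1W_1)^{-1}$, while $B_2$, $B_3$, and $B_4$ each carry a factor $A_3^{\diamond_q,W_3}$ or $P_{(A_3W_3)^q}-P_{A_3^{\diamond_q,W_3}}$ that is now zero, so all three off-blocks vanish. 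This yields exactly the block-diagonal form \eqref{weighted core ep}. The only point requiring care is the nilpotency bound $q\ge k\ge\ind(AW)$ that produces $(A_3W_3)^q=0$; beyond this, both claims are straightforward specializations of an already-proved representation, so I anticipate no real obstacle.
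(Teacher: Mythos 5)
Your proposal is correct and follows essentially the same route as the paper's proof: specialize \eqref{canonical q-BT} at $q=1$ via Corollary \ref{corolario 1}(ii), and for $q\ge k$ use the nilpotency of $A_3W_3$ to get $P_{(A_3W_3)^q}=0$, hence $A_3^{\diamond_q,W_3}=0$ and the collapse of $\Omega_W$ to $[W_1A_1W_1(W_1A_1W_1)^*]^{-1}$. Your block-by-block verification that $B_2$, $B_3$, $B_4$ vanish and $B_1=(W_1A_1W_1)^{-1}$ is just a slightly more explicit rendering of what the paper leaves implicit.
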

\begin{proof}
By Corollary \ref{corolario 1} we know that $A^{\diamond_q,W}=A^{\diamond,W} $ if $q=1$ and $A^{\diamond_q,W}=A^{\odagger,W} $ if  $q\ge k$. Clearly, if $q=1$, \eqref{canonical q-BT} reduces to the expression given in \eqref{weighted BT}. \\ On the other hand, if $q\ge k$  we obtain  $(A_3W_3)^q=0$. In fact, since $A_3W_3$ is nilpotent of index at most $k$, we have $P_{(A_3W_3)^q}=0$. Hence, 
$A_3^{\diamond_q,W_3}=(W_3A_3W_3P_{(A_3W_3)^q})^\dag=0$. Now, from \eqref{omega}, it follows that
$\Omega_W=[W_1A_1W_1 (W_1A_1W_1)^*]^{-1}$. In this way, \eqref{canonical q-BT} reduces to \eqref{weighted core ep}. 
\end{proof}

\begin{remark} When $k=\max\{\ind(AW), \ind(WA)\}=1$, the above representations  coincide with the $W$-weighted core inverse, that is, $A^{\diamond,W}=A^{\odagger,W}=A^{\core,W}$.
\end{remark}

If $A\in \Cnn$ has index $k$, by applying Theorem \ref{decomposition} with $m=n$ and $W=I_n$, we obtain the following canonical form of $A$   
\begin{equation} \label{core-EP decomposition}
A = U\left[\begin{array}{cc}
T & S \\
0 & N
\end{array}\right]U^*,
\end{equation}
where $U\in \Cnn$ is unitary, $T$ is nonsingular, $\ra(T)=\ra(A^k)$, and $N$ is nilpotent of index $k$.
This representation of $A$ is called the core-EP decomposition of $A$ \cite{Wang}.

By using \eqref{core-EP decomposition} we can give a canonical form for the $q$-BT inverse of a square matrix. 

\begin{corollary}\label{canonical qbt} 
Let $A \in \Cnn$, $k=\ind(A)$, and  $q\in \mathbb{N}\cup \{0\}$. If $A$ is written  as in  \eqref{core-EP decomposition}, then  the  $q$-BT  inverse of $A$ is given by
\begin{equation}\label{canonical q-BT}
 A^{\diamond_q} 
= U\left[\begin{array}{cc}
T^*\Delta & -T^*\Delta S N^{\diamond_q}\\
(P_N-P_{N^{\diamond_q}})S^*\Delta & N^{\diamond_q}-(P_N-P_{N^{\diamond_q}})S^*\Delta S N^{\diamond_q}
\end{array}\right]U^*,
\end{equation}
where $\Delta=(T T^*+ S(P_N-P_{N^{\diamond_q}})S^*)^{-1}$.
\end{corollary}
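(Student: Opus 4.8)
The plan is to obtain this statement as the specialization of Theorem \ref{weighted qBT representation} to the square, unweighted case $m=n$ and $W=I_n$, rather than recomputing anything from scratch. As recalled in the passage preceding \eqref{core-EP decomposition}, applying Theorem \ref{decomposition} with $W=I_n$ forces the two unitary factors to coincide ($V=U$) and collapses the weighted core-EP decomposition \eqref{weighted decomposition} into the core-EP decomposition \eqref{core-EP decomposition}. Concretely, in the notation of \eqref{weighted decomposition} this means $W_1=I_t$, $W_2=0$, $W_3=I_{n-t}$, together with the identifications $A_1=T$, $A_2=S$, and $A_3=N$. So the first step is simply to record that the hypotheses of Theorem \ref{weighted qBT representation} are met by these data.

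Next I would substitute these values into the auxiliary quantities appearing in Theorem \ref{weighted qBT representation}. The block $M=W_1A_1W_2+W_1A_2W_3+W_2A_3W_3$ collapses to $M=S$. Since $W_3=I_{n-t}$, Remark \ref{rem q-BT} identifies $A_3^{\diamond_q,W_3}$ with the $q$-BT inverse $N^{\diamond_q}$ of $N$, and moreover $(A_3W_3)^q=N^q$, so that $P_{(A_3W_3)^q}=P_{N^q}$ and $P_{A_3^{\diamond_q,W_3}}=P_{N^{\diamond_q}}$. Finally $W_1A_1W_1=T$ gives $(W_1A_1W_1)^*=T^*$, and the scalar-like matrix $\Omega_W$ becomes exactly $\Delta=(TT^*+S(P_{N^q}-P_{N^{\diamond_q}})S^*)^{-1}$.

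With these substitutions in hand, the last step is to match the four blocks of the canonical form of Theorem \ref{weighted qBT representation} term by term against the asserted formula: the $(1,1)$ block $(W_1A_1W_1)^*\Omega_W$ becomes $T^*\Delta$, and the off-diagonal and $(2,2)$ blocks transcribe identically once $M$, $A_3^{\diamond_q,W_3}$, and $\Omega_W$ are replaced by $S$, $N^{\diamond_q}$, and $\Delta$. Because Theorem \ref{weighted qBT representation} is already established, there is no genuine obstacle here; the only point demanding care is the verification that setting $W=I_n$ really does merge $U$ and $V$ and produce the clean identifications above, which is exactly the content of the remarks fixing \eqref{core-EP decomposition}. (An entirely self-contained alternative would bypass the weighted theorem and compute $A^{\diamond_q}=(AP_{A^q})^\dag$ directly from \eqref{core-EP decomposition}: one checks $P_{A^q}=U\,\mathrm{diag}(I_t,P_{N^q})\,U^*$ since $T^q$ is nonsingular, forms $AP_{A^q}$, and applies Lemma \ref{MP triangular}; this reproduces the same four blocks but is longer, so I would present the specialization as the main route.)
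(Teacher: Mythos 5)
Your proposal is correct and coincides with the paper's intended argument: Corollary \ref{canonical qbt} is stated in the paper without proof precisely because it is the specialization $m=n$, $W=I_n$ of Theorem \ref{weighted qBT representation}, with exactly the identifications you spell out ($V=U$, $W_1=I_t$, $W_2=0$, $W_3=I_{n-t}$, hence $M=S$, $A_3^{\diamond_q,W_3}=N^{\diamond_q}$ and $\Omega_W=\Delta$), as confirmed by the subsequent corollary for $(AW)^{\diamond_q}$, which is proved by citing this same specialization. One small point in your favor rather than against you: your substitution correctly produces $P_{N^q}$, whereas the printed statement writes $P_N$; these agree only for $q=1$ (e.g.\ for $q=0$ one needs $P_{N^0}=I_{n-t}$ so that the formula collapses to the Moore--Penrose formula of Lemma \ref{MP triangular}), so the printed $P_N$ is a typo for $P_{N^q}$ and your version of $\Delta$ is the correct one.
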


\begin{corollary}
Let $A \in \Cm$, $0\neq W\in \Cn$, and $k=\max\{\ind(AW), \ind(WA)\}$. If $A$ and $W$ are written  as in  (\ref{weighted decomposition}), then it results that
\[
(AW)^{\diamond_q}=U\left[\begin{array}{cc}
(A_1W_1)^*\Delta & -(A_1W_1)^*\Delta S (A_3W_3)^{\diamond_q}\\
(P_{A_3W_3}-P_{(A_3W_3)^{\diamond_q}})S^*\Delta & (A_3W_3)^{\diamond_q}-(P_{A_3W_3}-P_{(A_3W_3)^{\diamond_q}})S^*\Delta S (A_3W_3)^{\diamond_q}
\end{array}\right]U^*,
\]
with  $\Delta=(A_1W_1(A_1W_1)^*+ S(P_{A_3W_3}-P_{(A_3W_3)^{\diamond_q}})S^*)^{-1}$ and $S=A_1W_2+A_2W_3$,
and 
\[
(WA)^{\diamond_q}=U\left[\begin{array}{cc}
(W_1A_1)^*\Delta & -(W_1A_1)^*\Delta S (W_3A_3)^{\diamond_q}\\
(P_{W_3A_3}-P_{(W_3A_3)^{\diamond_q}})S^*\Delta & (W_3A_3)^{\diamond_q}-(P_{W_3A_3}-P_{(W_3A_3)^{\diamond_q}})S^*\Delta S (W_3A_3)^{\diamond_q}
\end{array}\right]U^*,
\]
with  $\Delta=(W_1A_1(W_1A_1)^*+ S(P_{W_3A_3}-P_{(W_3A_3)^{\diamond_q}})S^*)^{-1}$ and $S=W_1A_2+W_2A_3$.
\end{corollary}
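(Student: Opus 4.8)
The plan is to reduce both identities to a single application of Corollary \ref{canonical qbt}, exploiting that $AW$ and $WA$ are the square matrices $AW\in\C^{m\times m}$ and $WA\in\C^{n\times n}$, and that the weighted core-EP decomposition of the pair $(A,W)$ forces each of them into the core-EP decomposition form \eqref{core-EP decomposition}. Once that structural identification is made, each formula is just a verbatim substitution into the square-matrix canonical form.

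First I would multiply out the block factorizations in \eqref{weighted decomposition}. Since $U$ and $V$ are unitary, the inner factors $V^*V$ and $U^*U$ collapse to identities, so that
\[
AW = U\begin{bmatrix} A_1W_1 & A_1W_2+A_2W_3 \\ 0 & A_3W_3 \end{bmatrix}U^*, \qquad
WA = V\begin{bmatrix} W_1A_1 & W_1A_2+W_2A_3 \\ 0 & W_3A_3 \end{bmatrix}V^*.
\]
Thus $AW$ is conjugated by the unitary $U$ and $WA$ by the unitary $V$, each being block upper triangular with off-diagonal block equal to the matrix $S$ named in the corresponding part of the statement. Note in particular that the decomposition of $WA$ is governed by $V$, so the second canonical form is correctly written with the unitary $V$ on the outside.

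The one step that requires care is verifying that each of these displays is genuinely a core-EP decomposition in the sense of \eqref{core-EP decomposition}. For $AW$, the top-left block $A_1W_1$ is a product of the nonsingular matrices $A_1$ and $W_1$ (nonsingular by Theorem \ref{decomposition}) and hence nonsingular, while the bottom-right block $A_3W_3$ is nilpotent of index $\ind(AW)$, again by Theorem \ref{decomposition}; the same theorem yields $\ra(A_1W_1)=\ra((AW)^{\ind(AW)})$, so the rank condition in \eqref{core-EP decomposition} is met. The analogous facts hold for $WA$, with $W_1A_1$ nonsingular and $W_3A_3$ nilpotent of index $\ind(WA)$. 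This structural check is exactly what licenses the use of the square-matrix formula for an arbitrary exponent $q$.

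Finally, applying Corollary \ref{canonical qbt} to $AW$, with the identifications $T=A_1W_1$, $N=A_3W_3$, $S=A_1W_2+A_2W_3$ and outer unitary $U$, produces the first asserted expression, the quantity $\Delta$ specializing precisely to $(A_1W_1(A_1W_1)^*+S(P_{A_3W_3}-P_{(A_3W_3)^{\diamond_q}})S^*)^{-1}$. Applying the same corollary to $WA$, with $T=W_1A_1$, $N=W_3A_3$, $S=W_1A_2+W_2A_3$ and outer unitary $V$, yields the second expression with the corresponding $\Delta$. No genuine obstacle remains beyond the structural identification above, since the remainder is a direct transcription of the blocks $T^*\Delta$, $-T^*\Delta S N^{\diamond_q}$, $(P_N-P_{N^{\diamond_q}})S^*\Delta$ and $N^{\diamond_q}-(P_N-P_{N^{\diamond_q}})S^*\Delta S N^{\diamond_q}$ under these substitutions.
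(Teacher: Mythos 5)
Your proposal is correct and follows essentially the same route as the paper: the paper's proof likewise multiplies out \eqref{weighted decomposition} to obtain $AW=U\left[\begin{smallmatrix} A_1W_1 & A_1W_2+A_2W_3\\ 0 & A_3W_3\end{smallmatrix}\right]U^*$, observes that this is a core-EP decomposition of $AW$ (with $A_1W_1$ nonsingular and $A_3W_3$ nilpotent of index $\ind(AW)$), applies Corollary \ref{canonical qbt}, and dispatches $(WA)^{\diamond_q}$ ``in a similar way.'' Two remarks. First, your assertion that Theorem \ref{decomposition} ``yields'' $\ra(A_1W_1)=\ra((AW)^{\ind(AW)})$ is not literally part of that theorem, but no gap results: since $T:=A_1W_1$ is nonsingular of size $t$ and $N:=A_3W_3$ is nilpotent, $(AW)^{\ind(AW)}$ has block upper triangular form with invertible $(1,1)$-block $T^{\ind(AW)}$ and zero second block row, so its rank is $t=\ra(T)$, and the rank condition of \eqref{core-EP decomposition} holds automatically. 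Second, and more substantively, you claim the statement is ``correctly written with the unitary $V$ on the outside'' for $(WA)^{\diamond_q}$; in fact the corollary as printed uses $U[\cdot]U^*$ in both formulas, which is a typo in the paper, since $WA=V\left[\begin{smallmatrix} W_1A_1 & W_1A_2+W_2A_3\\ 0 & W_3A_3\end{smallmatrix}\right]V^*$ forces the outer unitary to be $V$. Your derivation therefore proves the corrected statement, a discrepancy that the paper's terse ``can be found in a similar way'' leaves hidden.
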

\begin{proof} 
From Theorem \ref{decomposition} we obtain
\begin{equation}\label{wa}
AW= U\left[\begin{array}{cc}
A_1W_1 & A_1W_2 + A_2 W_3\\
0 & A_3W_3
\end{array}\right]U^*,
\end{equation}
where $U$ is unitary, $A_1W_1$ is nonsingular, and $A_3W_3$ is nilpotent of index $\ind(AW)$.\\
Clearly, \eqref{wa} is a core-EP decomposition of $AW$. Thus, the 
the expression for $(AW)^{\diamond_q}$ follows from Corollary \ref{canonical qbt}. 
\\
The expression for 
$(WA)^{\diamond_q}$ can be found 
in a  similar way. 
\end{proof}

We recall that the $W$-weighted Drazin inverse and the $W$-weighted core-EP inverse of $A$ satisfy the interesting identities $A^{d,W}=[(AW)^d]^2A=A[(WA)^d]^2$ and $A^{\odagger,W}=A[(WA)^{\odagger}]^2$, from \eqref{properties w drazin} and \eqref{properties w core ep}, respectively.  \\ However, these equalities do not remain valid for the $W$-weighted $q$-BT inverse whenever $1\le q <k=\max\{{\rm Ind}(AW), {\rm Ind}(WA)\}$, as we can check with the following example.

\begin{example} Let
\[A=\left[\begin{array}{ccc}
1 &  1 & 0 \\
0 &  1 & 0 \\
0 &  0 & 1 \\
0 &  0 & 0
\end{array}\right] \quad \text{and}  \quad W=\left[\begin{array}{cccc}
1 &  1 & 0 & 0 \\
0 &  0 & 1 & 0 \\
0 &  0 & 0 & 1
\end{array}\right].\]
Note that $k=\max\{{\rm Ind}(AW), {\rm Ind}(WA)\}=\max\{3,2\}=3$. For  $1\le q<3$ we obtain
\[A^{\diamond_1,W}=
\left[\begin{array}{ccc}
\frac{1}{6} &  0 & 0 \\
\frac{1}{6} &  0 & 0 \\
\frac{1}{3} &  0 & 0 \\
0 &  0 & 0
\end{array}\right],\quad  
[(AW)^{\diamond_1}]^2A=\left[\begin{array}{ccc}
0 &  0 & 0 \\
0 &  0 & 0 \\
\frac{1}{2} &  0 & 0 \\
0 &  0 & 0
\end{array}\right] \quad \text{and}\quad A[(WA)^{\diamond_1}]^2=
\left[\begin{array}{ccc}
\frac{3}{25} &  0 & 0 \\
\frac{2}{25} &  0 & 0 \\
0 &  0 & 0 \\
0 &  0 & 0
\end{array}\right],\]
\[A^{\diamond_2,W}=
\left[\begin{array}{ccc}
\frac{1}{2} &  0 & 0 \\
\frac{1}{2} &  0 & 0 \\
0 &  0 & 0 \\
0 &  0 & 0
\end{array}\right],\quad  
[(AW)^{\diamond_2}]^2A=
\left[\begin{array}{ccc}
\frac{1}{4} &  \frac{1}{4} & 0 \\
\frac{1}{4} &  \frac{1}{4} & 0 \\
0 &  0 & 0 \\
0 &  0 & 0
\end{array}\right] \quad \text{and}\quad A[(WA)^{\diamond_2}]^2=\left[\begin{array}{ccc}
1 &  0 & 0 \\
0 &  0 & 0 \\
0 &  0 & 0 \\
0 &  0 & 0
\end{array}\right].\]
\end{example}

\begin{remark} If we take $q=3$ in the above example (i.e., $q=k=3$), from Corollary \ref{coro coincides} we have that $A^{\diamond_3,W}=A^{\odagger,W}$. Thus, from \eqref{properties w core ep} we obtain $A^{\diamond_3,W}=A[(WA)^{\diamond_3}]^2$,  which can be verified in the example given above, that is, 
\[A^{\diamond_3,W}=
\left[\begin{array}{ccc}
1 &  0 & 0 \\
0 &  0 & 0 \\
0 &  0 & 0 \\
0 &  0 & 0
\end{array}\right],\quad  
[(AW)^{\diamond_3}]^2A=
\left[\begin{array}{ccc}
1 &  1 & 0 \\
0 &  0 & 0 \\
0 &  0 & 0 \\
0 &  0 & 0
\end{array}\right], \quad \text{and}\quad A[(WA)^{\diamond_3}]^2=\left[\begin{array}{ccc}
1 &  0 & 0 \\
0 &  0 & 0 \\
0 &  0 & 0 \\
0 &  0 & 0
\end{array}\right].\]
\end{remark}

\section*{Author Contribution} 

All authors contributed equally to the writing of this paper. All authors read and approved the final manuscript.

\section*{Data Availability} 

No data was used.

\section*{Funding} 

The first and third author are partially supported by Universidad Nacional de R\'{\i}o Cuarto (Grant PPI 18/C559) and CONICET (Grant PIBAA 28720210100658CO). The second author was partially supported by Universidad Nacional de La Pampa, Facultad de Ingenier\'ia (Grant Resol. Nro. 135/19) and Ministerio de Econom\'{\i}a, Industria y Competitividad (Spain) [Grant Red de Excelencia RED2022-134176-T].

\section*{Declarations} 
 
\noindent {\bf Conflict of interest} The authors have no conflicts of interest.

\end{document}